 \newcommand{\maxzero}{\boldsymbol{\varepsilon}}
 \newcommand{\maxone}{\mathbb{1}}
\newtheorem{theorem}{Theorem}[section]
\newtheorem{corollary}[theorem]{Corollary}
\newtheorem{proposition}[theorem]{Proposition}
\theoremstyle{definition}
\newtheorem{example}[theorem]{Example}
\newtheorem{definition}[theorem]{Definition}
\newtheorem{remark}[theorem]{Remark}
\newtheorem{remarks}[theorem]{Remarks}
\def\Cc{\hbox{\sf C\kern -.47em {\raise .48ex \hbox{$\scriptscriptstyle |$}}
   \kern-.5em {\raise .48ex \hbox{$\scriptscriptstyle |$}} }}
\newcommand{\be}{\begin{equation}}
\newcommand{\ee}{\end{equation}}
\newcommand{\cP}{{\mathcal P}}
\newcommand{\RR}{\mathbb{R}}
\newcommand{\half}{\frac{1}{2}}
\newcommand{\Rmax}{\RR_{\max}}
\newcommand{\Rmin}{\RR_{\min}}
\newcommand{\Rxmax}{\RR_{\max}[x]}
\newcommand{\Rnnmax}{\Rmax^{n \times n}}
\DeclareMathOperator{\perm}{\mathrm{perm}}
\newcommand{\SG}{\mathfrak{S}}
\newcommand{\hhalf}[1]{#1^{\circ \half}}
\newcommand{\fullchi}[1]{\bar{\chi}_{#1}}
\newcommand{\norm}[1]{\lVert #1 \rVert}
\newcommand{\abs}[1]{\lvert #1 \rvert}
\newcommand{\markentry}[1]{{#1}^\star{}}
\numberwithin{equation}{section}
\newcommand{\maxconv}[1]{\boxplus_{#1}}
\begin{document}

\title[]{Polynomial convolutions in max-plus algebra}

\author{Amnon Rosenmann}
\address[A. Rosenmann and F. Lehner]{Institute of Discrete Mathematics,
	Graz University of Technology, Steyrergasse 30, A-8010 Graz, Austria}
\email[A.~Rosenmann]{rosenmann@math.tugraz.at}
\author{Franz Lehner}
\email[F.~Lehner]{lehner@math.tu-graz.ac.at}
\author{Aljo\v{s}a Peperko}
\address[A. Peperko]{Faculty of Mechanical Engineering, University of Ljubljana, A\v{s}ker\v{c}eva 6, SI-1000 Ljubljana, Slovenia; 
 Institute of Mathematics, Physics and Mechanics, Jadranska 19, SI-1000 Ljubljana, Slovenia. }
\email[A.~Peperko]{aljosa.peperko@fmf.uni-lj.si}

\thanks{2010 Mathematics Subject Classification. Primary 15A80; Secondary 15A15, 26C10.}
\thanks{Key words and phrases. Max-plus algebra, max-convolution of maxpolynomials, Hadamard product, characteristic maxpolynomial.}
\date{\today}
\maketitle

\begin{abstract}
Recently, in a work that grew out of their exploration of interlacing polynomials, Marcus, Spielman and Srivastava \cite{MSS15} and Marcus \cite{Marcus16} studied certain combinatorial polynomial convolutions.
These convolutions preserve real-rootedness and capture expectations of characteristic polynomials of unitarily invariant random matrices, thus providing a link to free probability.
We explore analogues of these types of convolutions in the setting of max-plus algebra.
In this setting the max-permanent replaces the determinant, the maximum is the analogue of the expected value and real-rootedness is replaced by full canonical form.
Our results resemble those of Marcus et al., however, in contrast to the classical setting we obtain an exact and simple description of all roots of the convolution of $p(x)$ and $q(x)$ in terms of the roots of $p(x)$ and $q(x)$.
\end{abstract}

\noindent
\\
\section{Introduction}
The study of polynomials, their roots and their critical points from the algebraic, analytic and geometric point of view has a long history (see, e.g., the monographs \cite{Dieudonne38,Marden66,RS02}, which emphasize the analytic-geometric approach).
Recently,  Marcus, Spielman and Srivastava \cite{MSS15} and Marcus
\cite{Marcus16} initiated the study of certain convolutions of polynomials that can implicitly be found in a paper by Walsh \cite{Walsh22} from the early last century.
They established a strong link to free probability by showing that these convolutions capture the expected characteristic polynomials of random matrices. 
However, their initial motivation came from the study of interlacing polynomials, which led to the solution of the Kadison-Singer problem \cite{MSS15a}, and they showed that  these convolutions preserve the property of the roots being real numbers.

In the present paper we explore analogues of these types of convolution polynomials in the setting of max-plus algebra.
In this setting the max-permanent replaces the determinant and the maximum is the analogue of the expected value.
Our results resemble those of \cite{MSS15} in terms of the formulas for the convolution polynomials, but the formulas as well as the computations are simpler in the max-plus setting.
In addition, whereas in the classical setting only bounds on the maximal roots of the convolution polynomials are known, in max-plus algebra we obtain an exact and simple description of all the roots of the convolutions of maxpolynomials in terms of the roots of the involved maxpolynomials.
The preservation of the real-rootedness in the classical convolutions is represented by the preservation of the full canonical form in convolutions in max-plus algebra.

The paper is organized in the following way.
In Section~\ref{sec:maxplusalgebra}
we discuss maxpolynomials in full canonical form \cite{C-GM80}, that is,
maxpolynomials which are formally fully reducible to linear factors.
We continue in Section~\ref{sec:charpoly} with a description of the different types of characteristic maxpolynomials that we deal with:
In addition to the standard characteristic maxpolynomial we also consider the full characteristic maxpolynomial and the Gram characteristic maxpolynomial.
It turns out that the full characteristic maxpolynomial is always in full canonical form and therefore more appropriate for the questions considered in the present paper.

Next, we explore in Section~\ref{sec:maxconv} the convolution of characteristic maxpolynomials, which is the equivalent of the additive convolution of \cite{MSS15}.
The additive convolution is defined over random orthogonal matrices, and this definition allows, when working with symmetric matrices, to reduce the computations to diagonal matrices.
In the max-plus setting the set of orthogonal matrices consists only of permutation matrices, hence the computation of the maximum (the analogue of the expectation computation in the additive convolution) is over a finite set.
In fact, also in the classical setting it suffices to perform the computations over the set of signed permutation matrices (but, on the other hand, also over the set of unitary matrices).
Another feature of max-plus algebra is that we cannot achieve diagonalization due to the fact that there are no ``negative'' elements with respect to the max operation.
It turns out, however, that the set of ``principally dominant'' matrices, the matrices whose characteristic maxpolynomial equals the full characteristic maxpolynomial, suffices for our purpose.
When the matrices are not principally dominant then
the computation is executed with respect to the full characteristic maxpolynomial. 
Whereas in the standard additive convolution only a bound on the maximal root of the convolution polynomial can be given \cite{Walsh22}, the roots of the max convolution polynomial of degree $n$ are exactly the maximal roots among those of the involved
characteristic maxpolynomials.
We close Section~\ref{sec:maxconv}  with the ``max-row convolution'', which is the analogue of the ``asymmetric additive convolution'' of \cite{MSS15}.

The final section is about Hadamard product of characteristic maxpolynomials.
The main result relates the Hadamard product of Gram characteristic
maxpolynomials to full characteristic maxpolynomials of a product of permuted
matrices
and thus provides an analogue of the multiplicative convolution in standard arithmetic. 
Here the ordered list of the roots of the resulting maxpolynomial consists of the product (sum in standard arithmetic) of the ordered lists of the roots of the involved maxpolynomials, whereas in the standard multiplicative convolution only a bound on the maximal root of the convolution polynomial is given \cite{Szego22}.
\section{Max-plus algebra}
\label{sec:maxplusalgebra}
In its current setting, max-plus algebra is a relatively new field, which
emerged from several branches of mathematics simultaneously. 
It is an algebra over the ordered, idempotent semiring (in fact, semifield)
$\Rmax = \RR \cup \{ -\infty \}$, equipped with the operations of addition $a
\oplus b = \max(a,b)$ and multiplication $a \odot b = a+b$, with the unit
elements $\maxzero =-\infty$ (for addition) and $\maxone = 0$ (for multiplication).
As in standard arithmetic, the operations of addition and multiplication are associative and commutative, and multiplication is distributive over addition.
Matrix and polynomial operations are defined similarly to their standard counterparts, with the max-plus operations replacing the standard operations.

Max-plus algebra is isomorphic to min-plus algebra (also known as tropical algebra), which is the semifield $\Rmin = \RR \cup \{\infty \}$, where addition is replaced by minimum and multiplication  by addition, and also to max-times algebra $\RR_{+}$, where addition is replaced by maximum and multiplication is the same as in standard arithmetic. 
For more on max-plus algebra we refer to the monograph of Butkovi{\v{c}} \cite{Butkovic10}. 
Max-plus algebra is a part of a broader branch of mathematics, ``idempotent mathematics'', which was developed mainly by Maslov and his collaborators (see \cite{Litvinov07} for a brief introduction). 

Max-plus algebra, together with  its isomorphic versions, provides an attractive way of describing a class of non-linear problems appearing for instance in manufacturing and transportation scheduling, information technology, discrete event-dynamic systems, combinatorial optimization, mathematical physics and DNA analysis (see, e.g., \cite{Butkovic10,BCOQ92,PS05,LM05,Litvinov07,B98,MP15,BGC-G09}, and the references cited there). 

For the sake of readability, we mostly suppress the multiplication sign $\odot$, writing $ab$ instead of $a \odot b$ and  $ax^3$ instead of $a \odot x^{\odot 3}$ or $a \odot x \odot x \odot x$.
Also, when an indeterminate $x$ appears without a coefficient, as in $x^n$, then its coefficient is naturally the multiplicative identity element, i.e. $0$.

\bigskip

\subsection{Maxpolynomials and tropical roots}
A \textbf{(formal) maxpolynomial} is an expression of the form
\begin{equation}
  \label{eq:defmaxpolynomial}
p (x)= \bigoplus _{k=0} ^n a_k x^k = \max \{a_k + k x: k=0, 1, \ldots, n \},
\end{equation}
where $a_0 , \ldots, a_n \in  \Rmax$ and $x$ is a formal indeterminate.
We assume that $a_n \neq \maxzero$ and then $p(x)$ is of degree $n$, unless $p(x) = \maxzero x^0$, the null maxpolynomial, which is of degree $-\infty$.
The terms $a_kx^k$ are called the monomials constituting $p(x)$ and normally the monomial 
$a_kx^k$ is omitted when $a_k = \maxzero$.
The set of maxpolynomials form a semiring (there is no additive inverse) $\Rxmax$ with respect to the max-plus operations of addition and multiplication. 
Each expression in $\Rxmax$ that is the result of application of these max-plus operations to maxpolynomials can be reduced to a unique \textbf{canonical form} as in \eqref{eq:defmaxpolynomial} according to the rules of $\Rmax$.
Hence, we say that two maxpolynomials are (formally) equal if they have the same canonical form.  

A maxpolynomial $p(x)$ induces a convex, piecewise-affine function $\hat{p}(x)$ on $\Rmax$.
Unlike the situation in standard arithmetic, two distinct formal maxpolynomials $p_1(x)$ and $p_2(x)$ may represent the same polynomial function, that is, $\hat{p}_1(x) = \hat{p}_2(x)$ as functions.
The (\textbf{max-plus} or \textbf{tropical}) \textbf{roots} of a maxpolynomial $p(x)$ are the points at which $\hat{p}(x)$ is non-differentiable.
The multiplicity of a root equals the change of the slope of $\hat{p}(x)$ at that root.
Equivalently, the roots of $p(x)$ are the values $r \neq \maxzero$ of $x$ 
for which the maximum \eqref{eq:defmaxpolynomial} is attained at least twice, that is, there exist $i,j \in \{0, \ldots,n \}$ such that $i \neq j$ and for all $k \in \{0, \ldots,n \}$, $\hat{p}(r) = a_i r^i = a_j r^j \geq a_k r^k$.
The multiplicity of the root $r$ is $\max_{i,j} \{ | i-j | \, : \, a_i r^i = a_j r^j\}$.
We also count $\maxzero=-\infty$ as a root with multiplicity $l$ whenever $a_0, a_1, \ldots , a_{l-1}$ are all equal to $\maxzero$ and $a_l \neq \maxzero$.
In this case the monomials $\maxzero x^i$, $0 \leq i \leq l-1$, represent the constant function $\maxzero$, which intersects the line $\hat{p}_l(x) = a_l x^l$ at $x = \maxzero$.

\goodbreak{}
\subsection{Derivatives of maxpolynomials}
\begin{definition}
	Given a maxpolynomial $p(x) \in \Rxmax$, we define its (formal) \textbf{max-plus derivative} to be the result of applying the max-linear shift operator (or ``annihilation operator'')
$\partial_x : \Rxmax \to \Rxmax$ 
defined by 
\begin{equation*}
  \partial_x x^k =
  \begin{cases}
    x^{k-1} & k\geq 1\\
    \maxzero & k=0
  \end{cases}
\end{equation*}
i.e.,
$$
\partial_x\left( \bigoplus_{i=0}^{n} a_i x^i \right) = \bigoplus_{i=0}^{n-1} a_{i+1}  x^i
.
$$
\end{definition}
	We will use the notation $p'(x)$ for $\partial_xp(x)$ and $p^{(i)}(x)$ for $\partial_x^ip(x)$ wherever convenient.
This is a derivation (Proposition~\ref{prop:leibniz} below);
in fact, it is the unique derivation on $\Rmax[x]$ satisfying
\begin{enumerate}[(i)]
	\item $a' = \maxzero$ for every $a \in \Rmax$,
	\item $x' = 0$,
\end{enumerate}
i.e., the unique operator on $\Rmax[x]$ satisfying
\begin{enumerate}[(i),resume]
	\item $(p \oplus q)'(x) = p'(x) \oplus q'(x)$ (linearity),
	\item $(pq)'(x) = (p'q)(x) \oplus (p q')(x)$ (Leibniz's rule),
\end{enumerate}
for every $p(x), q(x) \in \Rxmax$.

One then shows (e.g., by induction) that the following iterated Leibniz's rule holds:
\begin{proposition}[Leibniz's rule]
  \label{prop:leibniz}
	Let $p(x), q(x) \in \Rxmax$. Then
	$$
	(pq)^{(k)}(x) = \bigoplus_{i=0}^{k} p^{(i)}(x) \, q^{(k-i)}(x).
	$$
\end{proposition}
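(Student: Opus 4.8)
The plan is to argue by induction on $k$, using the single-step Leibniz's rule (property~(iv)) as the engine; the only genuinely new phenomenon compared with the classical computation is the absence of binomial coefficients, which I will trace to the idempotency of $\oplus$.

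First I would record the two ingredients that drive the induction. Linearity, property~(iii), is immediate from the definition of $\partial_x$, since the shift operator acts coefficientwise and $\max$ commutes with the reindexing. For the single-step rule I would compute at the level of coefficients: writing $p(x) = \bigoplus_i a_i x^i$ and $q(x) = \bigoplus_j b_j x^j$, the product $pq$ has coefficients $c_m = \bigoplus_{i+j=m} a_i b_j = \max_{i+j=m}(a_i+b_j)$, so the coefficient of $x^m$ in $(pq)'$ is $c_{m+1} = \max_{i+j=m+1}(a_i+b_j)$. On the other side, the coefficient of $x^m$ in $p'q \oplus pq'$ is $\max_{i+j=m}\max(a_{i+1}+b_j,\,a_i+b_{j+1})$. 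These agree because every pair $(i',j')$ with $i'+j' = m+1$ has either $i' \geq 1$ (contributing a term of the form $a_{(i'-1)+1}+b_{j'}$) or $j' \geq 1$, so the two index sets describe the same maximum; the convention $\partial_x x^0 = \maxzero$ automatically discards the degree-zero monomials that have no preimage under the shift.

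With (iii) and (iv) in hand, the induction is routine. The base case $k=0$ is the identity $pq = p^{(0)} q^{(0)}$. For the step, I would assume $(pq)^{(k)} = \bigoplus_{i=0}^{k} p^{(i)} q^{(k-i)}$, apply $\partial_x$, and use linearity to obtain $(pq)^{(k+1)} = \bigoplus_{i=0}^{k} \partial_x\bigl(p^{(i)} q^{(k-i)}\bigr)$. Applying the single-step rule to each summand gives $\bigoplus_{i=0}^{k}\bigl(p^{(i+1)} q^{(k-i)} \oplus p^{(i)} q^{(k+1-i)}\bigr)$. Reindexing the first family by $j = i+1$ turns it into $\bigoplus_{j=1}^{k+1} p^{(j)} q^{(k+1-j)}$, while the second family is $\bigoplus_{i=0}^{k} p^{(i)} q^{(k+1-i)}$; together they range over all $i \in \{0,\dots,k+1\}$.

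The one point that needs care — and the only place the argument departs from its classical counterpart — is that the two reindexed families overlap on the interior indices $1 \leq i \leq k$, so each such term $p^{(i)} q^{(k+1-i)}$ appears twice. In the classical Leibniz rule this overlap is precisely what accumulates into the binomial coefficients $\binom{k+1}{i}$; here, because $\oplus$ is idempotent, the relation $a \oplus a = a$ collapses the duplicates and no coefficient survives. Hence the combined expression is simply $\bigoplus_{i=0}^{k+1} p^{(i)} q^{(k+1-i)}$, completing the induction. I expect this idempotency bookkeeping to be the main (and essentially the only) conceptual obstacle; everything else is formal.
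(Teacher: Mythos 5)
Your proof is correct and follows exactly the route the paper intends: the paper merely remarks that the iterated rule ``is shown e.g.\ by induction'' from linearity and the single-step Leibniz rule, and your argument supplies precisely that induction, including the coefficientwise verification of the single-step rule and the observation that idempotency of $\oplus$ absorbs the duplicated interior terms that would classically become binomial coefficients. No gaps.
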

\begin{remarks}
  \begin{enumerate}[1.]
   \item 
	Proposition~\ref{prop:rootsderivative} below asserts that roughly speaking, the derivative of a maxpolynomial function can be defined as the operation of removing the smallest root (and, in general, a possible reduction in the value of other smallest roots).
   \item 
    We emphasize that our derivation operates on a purely formal level.
    The derivatives of different maxpolynomial $p(x)$ representatives of a piecewise-linear convex function $\hat{p}(x)$ need not coincide, e.g., the maxpolynomials 
    $p(x) =  x^2 \oplus x \oplus 0$ and
    $q(x) = x^2 \oplus 0$
    are functionally equivalent: $\hat{p}(x) = \hat{q}(x)$, however, the derivatives $p'(x) = x \oplus 0$ and $q'(x)=x$ are not.
	In order for a derivative to respect functional equivalence we could choose to apply the above formal derivative not to $p(x)$ but to its FCF representative (see subsection~\ref{subsec:FCF} below).
	However, the maxpolynomials we are treating here are in FCF and the two definitions of a derivation coincide.
    \item 
	 For another form of derivative in tropical mathematics we refer to the ``layered derivative'', which is defined in \cite{IKR14}.
    \item 
    Given a maxpolynomial $p(x) = \bigoplus_{i=0}^n a_i x^i$, it can be represented via its Taylor expansion around $\maxzero$ as
    $$
    \bigoplus_{i=0}^n \widehat{p^{(i)}}(\maxzero) x^i,
    $$
    however, the Taylor expansion does not hold in other points.
  \end{enumerate}
\end{remarks}
\subsection{Full canonical maxpolynomials}
\label{subsec:FCF}
\begin{definition}
	A maxpolynomial $p(x) = \bigoplus_{i=0}^{n} a_i x^i$ is in \textbf{full canonical form} \cite{C-GM80} (or is an \textbf{FCF maxpolynomial}) if it is either constant or equal to the formal expansion of a product of linear factors: $p(x) = a_n (x \oplus s_1) \cdots (x \oplus s_n)$.
\end{definition}
The coefficients of FCF maxpolynomials form a concave sequence (these polynomials are also called \emph{concavified polynomials} or \emph{polynomials of maximum canonical form} \cite{BCOQ92}, \emph{maximally represented maxpolynomials} \cite{Tsai:2012:working} or \emph{least coefficient minpolynomials} in the min-plus setting \cite{GriggManwaring:2007:elementary}).
As before, we adopt the convention that monomials with coefficient $\maxzero$
are not written out and then it is clear that every monomial is in full canonical form; indeed $ax^n=a(x\oplus\maxzero)^n$.

If we partition the set of maxpolynomials in $\Rxmax$ according to their functional property then each equivalence class has a unique FCF maxpolynomial that represents it.
For example, $p(x) = x^4 \oplus x^3 \oplus x^2 = (x \oplus 0)^2 (x \oplus \maxzero)^2$ is the full canonical representative of the equivalence class of all maxpolynomials $q(x)$ with $\hat{q}(x) = \hat{p}(x)$ which contains, among others, the  maxpolynomials $x^4 \oplus x^2$ and $x^4 \oplus (-1) x^3 \oplus x^2$ which are not FCF.

In the following proposition we adopt the convention $-\infty - (-\infty) = -\infty$.
\begin{proposition}
	Let $p(x) = \bigoplus_{i=0}^{n} a_i x^i$ and let $r_1 \leq r_2 \leq \cdots \leq r_n$ be its roots.
	Then the following are equivalent.
	\begin{enumerate}[(i)]
		\item $p(x)$ is in full canonical form: $p(x) = a_n (x \oplus s_1) \cdots (x \oplus s_n)$ for some $s_1, \ldots, s_n$.
		\item\label{it:semiess:factorr}
                 $p(x) = a_n (x \oplus r_1) \cdots (x \oplus r_n)$.
		\item $r_i = a_{i-1} - a_i$, $i = 1, \ldots, n$.
		\item Concavity: $a_{i-1} - a_i \leq a_i - a_{i+1}$, $i = 1, \ldots, n-1$.
		\item $\hat{p}(r_i) = a_i r_i^i$ ($a_i + i r_i$ in standard arithmetic) for $i = 1, \ldots, n$.
	\end{enumerate}
\label{prop:fullcanonical}
\end{proposition}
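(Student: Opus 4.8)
The plan is to single out the concavity condition (iv) as a hub and prove that each of (i), (ii), (iii), (v) is equivalent to it. Everything is driven by one computation, the max-plus expansion of a product of linear factors: for $s_1 \le \cdots \le s_n$ one has $a_n(x\oplus s_1)\cdots(x\oplus s_n) = \bigoplus_{k=0}^n \bigl(a_n\odot s_{k+1}\odot\cdots\odot s_n\bigr)\, x^k$, since choosing the indeterminate from $k$ of the factors and the largest available constants from the remaining ones maximizes each coefficient. In standard notation the coefficient of $x^k$ is thus $a_n + (s_{k+1}+\cdots+s_n)$, so that consecutive coefficient differences telescope: $a_{k-1}-a_k = s_k$. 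Alongside this I would record the elementary \emph{dominance inequality}: for the monomials of a given $p$ one has $a_k r^k \ge a_{k-1}r^{k-1}$ (that is, $a_k+kr \ge a_{k-1}+(k-1)r$) if and only if $r \ge a_{k-1}-a_k$. Writing $t_k := a_{k-1}-a_k$, condition (iv) is exactly $t_1 \le t_2 \le \cdots \le t_n$.

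For the forward direction I would assume (iv) and use the dominance inequality to locate the roots. On each open interval $(t_k,t_{k+1})$ the monomials increase with their index while $x>t_j$ and decrease afterwards, so $a_k x^k$ is the strict maximum there; hence $\hat p$ has slope $k$ on that interval and its only non-differentiability points are the $t_k$, the slope jumping at a value $v$ by the number of indices $j$ with $t_j=v$. Consequently the roots of $p$ counted with multiplicity are exactly $t_1 \le \cdots \le t_n$, which is (iii) $r_i=a_{i-1}-a_i$; moreover $a_i x^i$ is active throughout $[t_i,t_{i+1}]$, in particular at its left endpoint $r_i=t_i$, giving (v) $\hat p(r_i)=a_i+ir_i$; and $p=a_n\prod_i(x\oplus t_i)=a_n\prod_i(x\oplus r_i)$, which is (ii), hence (i).

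Each reverse implication then feeds back to (iv) cheaply. Given a factorization as in (i), I sort the $s_i$ and read off the coefficients by the expansion above to get $t_k=a_{k-1}-a_k=s_{(k)}$, the $k$-th smallest of the $s_i$, which is nondecreasing; this settles (i)$\Rightarrow$(iv), with (ii)$\Rightarrow$(iv) as a special case. Condition (iii) says $t_i=r_i$, and since the $r_i$ are sorted nondecreasingly by hypothesis, (iv) is immediate. Finally (v)$\Rightarrow$(iv) is a one-line estimate: for $1\le i\le n-1$, comparing the maximum $\hat p(r_i)=a_i+ir_i$ with the two neighboring monomials gives $a_{i-1}+(i-1)r_i \le a_i+ir_i$ and $a_{i+1}+(i+1)r_i \le a_i+ir_i$, i.e.\ $a_{i-1}-a_i \le r_i \le a_i-a_{i+1}$, whence $a_{i-1}-a_i \le a_i-a_{i+1}$.

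I expect the only genuine work to lie in the forward step, namely checking that under concavity the non-differentiability points of $\hat p$ occur at the $t_k$ \emph{with the correct multiplicities}: at a value hit by several coinciding $t_k$ one must verify that the intervening monomials are genuinely active (they lie on a common supporting line), so that the slope jump equals the number of coinciding $t_k$ and thereby matches the definition of root multiplicity. The remaining subtlety is purely bookkeeping around $\maxzero=-\infty$: a maxpolynomial of finite degree in full canonical form can carry $\maxzero$ only among its lowest coefficients $a_0,\dots,a_{l-1}$ (a prefix), corresponding to $l$ roots at $-\infty$, and with the stated convention $-\infty-(-\infty)=-\infty$ the identities $t_i=a_{i-1}-a_i$ and their nondecreasing ordering persist through these infinite roots; I would dispose of this in a short remark and run the main argument assuming the coefficients are finite.
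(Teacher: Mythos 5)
Your proof is correct, and it is both more complete and differently organized than the one in the paper. The paper's proof is essentially a sketch: it declares the equivalences to "follow from the definition of the tropical roots and simple manipulations," then demonstrates only (i)$\Rightarrow$(ii),(iii) by the same expansion of $a_n(x\oplus s_1)\cdots(x\oplus s_n)$ that you use (sorting the $s_i$, reading off $a_k=a_n s_{k+1}\dotsm s_n$, hence $a_{k-1}-a_k=s_k$), and disposes of (v) with a one-sentence geometric remark about corners of $\hat p$. You instead make the concavity condition (iv) the hub and supply the one genuinely nontrivial step that the paper omits: deducing from $t_1\le\cdots\le t_n$ (where $t_k=a_{k-1}-a_k$), via the dominance inequality $a_k+kr\ge a_{k-1}+(k-1)r \iff r\ge t_k$, that $a_kx^k$ is the unique maximizer on $(t_k,t_{k+1})$, so the non-differentiability points of $\hat p$ are exactly the $t_k$ with multiplicities given by coincidences among them. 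This is what actually certifies that the $t_k$ are the roots $r_k$ \emph{with the correct multiplicities}, a point the paper glosses over, and it simultaneously yields (v) since $a_ix^i$ is active at the left endpoint $t_i$. Your reverse implications, in particular the two-sided estimate $a_{i-1}-a_i\le r_i\le a_i-a_{i+1}$ extracted from (v), are clean and correct, and the telescoping $a_n+t_{k+1}+\cdots+t_n=a_k$ closes the loop back to (ii). The only caveat, which you flag yourself, is the bookkeeping around coefficients equal to $\maxzero$; deferring that to a remark with the convention $-\infty-(-\infty)=-\infty$ is consistent with what the paper does. In short, the paper buys brevity at the cost of leaving the (iv)$\Rightarrow$(ii),(iii),(v) direction and the multiplicity count unverified; your hub structure buys a complete, non-circular chain of equivalences at the cost of a longer slope analysis.
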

\begin{proof}
	The equivalences follow from the definition of the tropical roots of a maxpolynomial and from simple manipulations of the above expressions.
	
	We demonstrate some of the implications.
	Suppose that (i) holds.  Without loss of generality we may assume
        that $s_1 \leq \cdots \leq s_n$.
	Expanding the expression $a_n (x \oplus s_1) \cdots (x \oplus s_n)$ 
        shows that the coefficients $a_k$ reduce to
        \begin{align*}
          a_k&= \bigoplus_{i_1<i_2<\dots<i_{n-k}} a_n s_{i_1}\ s_{i_2}\dots s_{i_{n-k}}\\
		  &= a_n s_{k+1} \dots s_n.
        \end{align*}
        It follows that $a_{k-1}-a_k=s_k$ and moreover
        \begin{align*}
		  \hat{p}(s_k) &= a_n (s_k \oplus s_1) \cdots (s_k \oplus s_k)(s_k \oplus s_{k+1}) \cdots (s_k \oplus s_n)  \\       
		  &= a_n  s_{k+1} \dots s_n s_{k}^k = a_k s_{k}^k \\
		  &= a_n s_{k} \dots s_n s_{k}^{k-1} = a_{k-1} s_k^{k-1}.\\
        \end{align*}
		Hence $\hat{p}(s_k) = a_k s_k^{k} = a_{k-1} s_k^{k-1}$ and 
		$x=s_k$ is a root for each $k$, i.e.,
        $s_k=r_k$ for each $k$ and thus (ii) and (iii) hold.

	As for $(v)$, geometrically, at each root $x = r_i$ of $p(x)$, the convex piecewise-linear function $\hat{p}(x)$ has a corner at the point $(r_i, \hat{p}(r_i) )$ ($\hat{p}(x)$ is non-differentiable at $x = r_i$), and when $p(x)$ is in full canonical form then when $r_i = r_{i+1} = \cdots = r_{i+l}$ all lines $a_{i+j} x^{i+j}$, $j =0,\ldots,l$, pass through this corner.
\end{proof} 
The sum of maxpolynomials in full canonical form need not be in full canonical form; however the next proposition shows that FCF polynomials are closed under
the operation of derivative and multiplication.
\begin{proposition}
	Let $p(x), q(x)$ be FCF maxpolynomials. Then
	\begin{enumerate}[(i)]
		\item $p'(x)$ is in FCF.
		\item $(pq)(x)$ is in FCF. 
	\end{enumerate}
\label{prop:qe}
\end{proposition}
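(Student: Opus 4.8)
The plan is to reduce both statements to the concavity characterization of FCF from Proposition~\ref{prop:fullcanonical}(iv), while noting that part (ii) is in fact immediate from the definition. For part (i), write $p(x) = \bigoplus_{i=0}^n a_i x^i$; since $p$ is FCF, Proposition~\ref{prop:fullcanonical} says its roots $r_i = a_{i-1}-a_i$ form a non-decreasing sequence, read with the convention $-\infty-(-\infty)=-\infty$, equivalently that the successive differences of $(a_i)$ are non-decreasing. The derivative $p'(x)=\bigoplus_{i=0}^{n-1} a_{i+1}x^i$ has coefficient sequence $(a_1,a_2,\ldots,a_n)$, the original sequence with its constant term deleted, so its successive differences are $(r_2,\ldots,r_n)$ --- a tail of the non-decreasing sequence $(r_i)$, hence still non-decreasing. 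Reading Proposition~\ref{prop:fullcanonical}(iv) backwards then yields that $p'$ is FCF. Alternatively, the coefficient formula $a_k=a_n s_{k+1}\cdots s_n$ from the proof of Proposition~\ref{prop:fullcanonical} shows directly that $p'(x)=a_n(x\oplus r_2)\cdots(x\oplus r_n)$, a product of linear factors, matching the ``remove the smallest root'' description.

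For part (ii), I would argue straight from the definition. Writing $p(x)=a_m(x\oplus s_1)\cdots(x\oplus s_m)$ and $q(x)=b_n(x\oplus t_1)\cdots(x\oplus t_n)$ as formal expansions of products of linear factors, associativity and commutativity of multiplication in $\Rxmax$ give
\[
(pq)(x)=a_m b_n (x\oplus s_1)\cdots(x\oplus s_m)(x\oplus t_1)\cdots(x\oplus t_n),
\]
again a product of $m+n$ linear factors. Since every element of $\Rxmax$ has a unique canonical form, the canonical form of $pq$ is precisely the formal expansion of this product, so $pq$ is FCF by definition.

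I expect the only delicate point to lie in part (i): one must check that deleting the constant term interacts correctly with any leading $-\infty$ coefficients (the roots equal to $\maxzero$), so that the concavity inequalities involving $-\infty-(-\infty)$ transfer verbatim to the shortened sequence. Since a concave coefficient sequence forces all $\maxzero$ coefficients to occur as an initial block, this bookkeeping is routine. Part (ii) carries essentially no obstacle: the class of products of linear factors is manifestly closed under multiplication, and the only fact invoked is the uniqueness of canonical forms in $\Rxmax$, ensuring that nothing further collapses when $pq$ is reduced to canonical form.
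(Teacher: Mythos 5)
Your proof is correct and takes essentially the same route as the paper's: part (ii) by closure of products of linear factors under multiplication, and part (i) by observing that deleting the constant term leaves the tail $(r_2,\dots,r_n)$ of the concave difference sequence, so concavity is preserved and $p'(x)=a_n(x\oplus r_2)\cdots(x\oplus r_n)$. The extra bookkeeping you flag for $\maxzero$ coefficients is handled correctly (concavity does force them into an initial block) and does not change the argument.
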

\begin{proof}
	Let us assume that $p(x) = \bigoplus_{i=0}^{n} a_i x^i$ and $q(x) = \bigoplus_{i=0}^{m} b_i x^i$ are not constant, otherwise the proof follows immediately.
	Since $p(x)$ and $q(x)$ are FCF maxpolynomials they can be linearly factored: $p(x) = a_n (x \oplus r_1) \cdots (x \oplus r_n)$, $q(x) = b_m (x \oplus s_1) \cdots (x \oplus s_m)$.
	The product of the maxpolynomials is then $(pq)(x) = a_n b_m (x \oplus r_1) \cdots (x \oplus r_n) (x \oplus s_1) \cdots (x \oplus s_m)$, which is also a product of linear factors and hence FCF.
	
	The derivative of $p(x)$ is $p'(x) =\bigoplus_{i=1}^{n} a_i x^{i-1}$ and the differences $a_{i-1} - a_i$ (that satisfy the concavity condition) remain in $f'(x)$, except for $r_1 = a_0 - a_1$.
	It follows that $p'(x) = a_n (x \oplus r_2) \cdots (x \oplus r_n)$, which is again in full canonical form.
\end{proof} 
The next proposition shows that the derivative acts on FCF maxpolynomials
as a shift on the roots as well.
\begin{proposition}
  \label{prop:rootsderivative}
  Let $p(x)$ be an FCF maxpolynomial
  with roots $r_1\leq r_2\leq\dots\leq r_n$.
  Then the roots of $p'(x)$ are $r_2\leq r_3\leq\dots  \leq r_n$.
\end{proposition}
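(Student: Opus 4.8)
The plan is to read off the conclusion from the two preceding propositions, which together essentially settle the matter. First I would invoke Proposition~\ref{prop:qe}(i), which guarantees that $p'(x)$ is again an FCF maxpolynomial. This is the crucial reduction: once $p'$ is known to be in full canonical form, Proposition~\ref{prop:fullcanonical} applies to it, and in particular its roots can be read directly off its coefficients through the identity $r_i = a_{i-1} - a_i$.

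Concretely, write $p(x) = \bigoplus_{i=0}^n a_i x^i$. Since $p$ is FCF, Proposition~\ref{prop:fullcanonical}(iii) gives $r_i = a_{i-1} - a_i$ for $i = 1, \dots, n$ (using the convention $-\infty - (-\infty) = -\infty$). The derivative $p'(x) = \bigoplus_{j=0}^{n-1} a_{j+1} x^j$ has the shifted coefficient sequence $b_j = a_{j+1}$, and applying Proposition~\ref{prop:fullcanonical}(iii) to the FCF maxpolynomial $p'$ yields its roots as
$$
r'_j = b_{j-1} - b_j = a_j - a_{j+1} = r_{j+1}, \qquad j = 1, \dots, n-1.
$$
Hence the roots of $p'$ are exactly $r_2 \leq r_3 \leq \dots \leq r_n$. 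Equivalently, and perhaps more cleanly for the write-up, the proof of Proposition~\ref{prop:qe}(i) already exhibits the factorization $p'(x) = a_n(x \oplus r_2) \cdots (x \oplus r_n)$, from which the equivalence (i)$\Leftrightarrow$(ii) of Proposition~\ref{prop:fullcanonical} identifies the roots as $r_2, \dots, r_n$ immediately.

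Because the earlier propositions carry all the weight, I do not anticipate a genuine difficulty. The only step demanding a moment of care is the treatment of roots equal to $\maxzero = -\infty$: when the lowest coefficients vanish, so that $a_0 = \dots = a_{l-1} = \maxzero$ and $r_1 = \dots = r_l = \maxzero$, I would verify that the convention $-\infty - (-\infty) = -\infty$ makes the identity $r'_j = a_j - a_{j+1}$ reduce correctly, so that removing the single smallest root $r_1$ leaves $r_2, \dots, r_n$ (some of which may still equal $\maxzero$) with the right multiplicities. As a sanity check I would keep in mind the geometric picture from the first remark after Proposition~\ref{prop:leibniz}: differentiation deletes the leftmost corner of the convex piecewise-linear graph $\hat{p}$, which is precisely the removal of the smallest root $r_1$.
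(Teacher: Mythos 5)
Your proposal is correct and follows essentially the same route as the paper: the paper's proof likewise begins by citing the proof of Proposition~\ref{prop:qe}, which already exhibits the factorization $p'(x) = a_n(x \oplus r_2)\cdots(x \oplus r_n)$, and your coefficient-shift computation via Proposition~\ref{prop:fullcanonical}(iii) is just a more explicit write-up of that same argument. (The paper also sketches a second route via Leibniz's rule applied to the factorization of $p$, but that is offered only as an alternative.)
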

\begin{proof} 
The result follows from the proof of Proposition \ref{prop:qe}.
Alternatively, by applying Leibniz's rule to the factorization
  \begin{equation*}
    p(x) = a_n(x\oplus r_1)(x\oplus r_2)\dotsm(x\oplus r_n)
  \end{equation*}
 we  obtain
  \begin{equation*}
    p'(x) = a_n \bigoplus_{k=1}^n (x\oplus r_1)\dotsm(x\oplus r_{k-1}) (x\oplus r_{k+1}) \dotsm(x\oplus r_n),
  \end{equation*}
	that is, the term $(x\oplus r_k)$ is eliminated in the $k$-th summand.
  Now it is clear that the maximal value is attained when the smallest root is eliminated.
\end{proof}

\begin{corollary}
Let $p(x)$ be a maxpolynomial of degree $n$.
Then $p(x)$ is in FCF if and only if the roots of $p^{(k)}(x)$ are the
largest $n-k$ roots of $p(x)$, for $k=0,\ldots,n$.
\end{corollary}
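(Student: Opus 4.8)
The plan is to prove the two implications separately, handling the ``only if'' direction as a short induction on $k$ and devoting the real work to the ``if'' direction. For the forward direction, write the roots of $p(x)$ as $r_1 \le \cdots \le r_n$; the case $k=0$ is trivial, and for the inductive step I would assume $p^{(k)}(x)$ is in FCF with roots $r_{k+1} \le \cdots \le r_n$. Then $p^{(k+1)}(x) = (p^{(k)})'(x)$ is again in FCF by Proposition~\ref{prop:qe}(i), and by Proposition~\ref{prop:rootsderivative} its roots are obtained from those of $p^{(k)}(x)$ by deleting the smallest, namely $r_{k+2} \le \cdots \le r_n$, i.e.\ the largest $n-(k+1)$ roots of $p(x)$. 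This closes the induction.

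For the backward direction I would induct on the degree $n$, the cases $n \le 1$ being immediate since every maxpolynomial of degree at most $1$ is in FCF. Assuming the statement in degree $n-1$, let $p(x) = \bigoplus_{i=0}^n a_i x^i$ satisfy the hypothesis, with roots $r_1 \le \cdots \le r_n$. The first step is to observe that $p'(x)$, of degree $n-1$, inherits the hypothesis: since $(p')^{(k)} = p^{(k+1)}$ and, by the $k=1$ instance, the roots of $p'(x)$ are exactly $r_2 \le \cdots \le r_n$, the condition for $p(x)$ at index $k+1$ translates verbatim into the condition for $p'(x)$ at index $k$. By the induction hypothesis $p'(x)$ is therefore in FCF, so by Proposition~\ref{prop:fullcanonical} its coefficient sequence $a_1, a_2, \ldots, a_n$ is concave, equivalently $a_1 - a_2 \le a_2 - a_3 \le \cdots \le a_{n-1} - a_n$; these differences are precisely the sorted roots $r_2, \ldots, r_n$ of $p'(x)$, so that $r_i = a_{i-1} - a_i$ for $i = 2, \ldots, n$.

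It remains to establish the one missing inequality $a_0 - a_1 \le a_1 - a_2$, and here lies the crux. The idea is to pin down $r_1$ by a tropical Vieta relation. Passing to the FCF representative $\tilde p(x) = a_n (x \oplus r_1) \cdots (x \oplus r_n)$ of the function $\hat p(x)$ — which shares with $p(x)$ both the leading coefficient $a_n$ (the slope-$n$ asymptote as $x \to +\infty$) and the constant term $a_0 = \hat p(\maxzero)$ — Proposition~\ref{prop:fullcanonical} gives $a_0 = a_n + \sum_{i=1}^n r_i$. Since $\sum_{i=2}^n r_i = \sum_{i=2}^n (a_{i-1}-a_i) = a_1 - a_n$ telescopes, this forces $r_1 = a_0 - a_1$. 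Finally $r_1 \le r_2$ yields $a_0 - a_1 \le a_1 - a_2$, completing the concavity of $(a_i)$, whence $p(x)$ is in FCF by Proposition~\ref{prop:fullcanonical}.

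I expect the main obstacle to be this Vieta step: one must justify that $p(x)$ and its FCF representative share both $a_0$ and $a_n$, so that $\sum_{i=1}^n r_i = a_0 - a_n$ holds even though $p(x)$ itself need not be in FCF, and one must treat the degenerate case $a_0 = \maxzero$ (a root at $-\infty$), which is absorbed by the convention $-\infty - (-\infty) = -\infty$ adopted before Proposition~\ref{prop:fullcanonical}. The reduction from $p(x)$ to $p'(x)$ also deserves care, since it invokes the $k=1$ instance twice: once to identify the root multiset of $p'(x)$, and once inside the reindexed conditions that feed the induction hypothesis.
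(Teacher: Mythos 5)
Your proof is correct, and the ``only if'' direction coincides with the paper's (induction via Propositions~\ref{prop:qe} and~\ref{prop:rootsderivative}). The ``if'' direction, however, takes a genuinely different route. The paper argues by contraposition: it locates the \emph{first} index $k$ at which concavity of the coefficient sequence fails, and then exhibits that particular $p^{(k)}(x)$ as having a root $a_k-a_{k+1}$ strictly smaller than $r_{k+1}$, so its roots are not the top $n-k$ roots of $p(x)$. You instead argue directly by induction on the degree: the hypothesis passes to $p'(x)$, the induction hypothesis makes $p'(x)$ FCF and hence pins down $r_i=a_{i-1}-a_i$ for $i\ge 2$, and the one missing inequality $a_0-a_1\le a_1-a_2$ is recovered from the tropical Vieta identity $a_0=a_n\odot r_1\dotsm r_n$ applied to the FCF representative of $\hat p$. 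The trade-off: the paper's argument is shorter and needs only a single witnessing $k$, but it asserts without proof the root set of $p^{(k)}(x)$ for a possibly non-FCF polynomial; your argument is longer but each step rests on the stated propositions, at the price of justifying that the constant term and leading coefficient are functional invariants shared with the FCF representative (which holds, since $a_0=\lim_{x\to\maxzero}\hat p(x)$ and $a_n=\lim_{x\to\infty}(\hat p(x)-nx)$) and of tracking the $\maxzero$ conventions, which you correctly flag: under your hypothesis a coefficient $a_i=\maxzero$ with $i\ge 1$ forces $a_0=\maxzero$ as well, so the telescoping sum never produces an ill-defined difference. Both proofs are sound; yours is more self-contained, the paper's more economical.
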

\begin{proof} 
Let $p(x) = \bigoplus_{i=0}^{n} a_i x^i$ with roots $r_1\leq r_2\leq\dots\leq r_n$.
Suppose that $p(x)$ is in full canonical form.
Then by Proposition~\ref{prop:qe}, $p'(x)$ is also in FCF and
by Proposition~\ref{prop:rootsderivative} its roots are $r_2,\ldots,r_n$.
By induction, for $k=0,\ldots,n$, $p^{(k)}(x)$ is in FCF with roots $r_{k+1},\ldots,r_n$, the $n-k$ largest roots of $p(x)$.

Suppose now that $p(x)$ is not in FCF.
Then the concavity property of Proposition~\ref{prop:fullcanonical}~(iv) is not fulfilled.
That is, there exists $1 \leq k \leq n-1$ such that $a_{i-1} - a_i \leq a_i - a_{i+1}$ for $i = 1, \ldots, k-1$ but $a_{k-1} - a_k > a_k - a_{k+1}$.
It follows that $r_{k+1} > a_k - a_{k+1}$ (as property (iii) of Proposition~\ref{prop:fullcanonical} is not satisfied).
Then the roots of $p^{(k)}(x) = \bigoplus_{i=k}^{n} a_i x^i$ are $a_k - a_{k+1}, r_{k+2},\ldots,r_n$, which are not the largest $n-k$ roots of $p(x)$.
\end{proof}	
\begin{example}
	Let $p(x) = (-1)x^2 \oplus x \oplus 1 = (-1)(x \oplus 1)^2$ and $q(x) = x^2 \oplus x \oplus 0 = (x \oplus 0)^2$.
	Then $p'(x) = (-1)x \oplus 0 = (-1)(x \oplus 1)$ and $pq(x) = (-1) x^4 \oplus x^3 \oplus 1x^2 \oplus 1x \oplus 1 = (-1)(x \oplus 0)^2 (x \oplus 1)^2$. Hence, $p(x), q(x), p'(x)$ and $(pq)(x)$ can be decomposed into linear factors and thus are FCF maxpolynomials.
	
	Here $(p \oplus q)(x)$ is not in full canonical form: $(p \oplus q)(x) = x^2 \oplus x \oplus 1$ has roots $r_1 = r_2 = \frac{1}{2}$. However, $(x \oplus \frac{1}{2})^2 = x^2 \oplus \frac{1}{2} x \oplus 1 \neq (p \oplus q)(x)$ as a maxpolynomial expression.
\end{example}

\section{The characteristic maxpolynomials of a matrix}
\label{sec:charpoly}

\begin{definition}
\label{def:permanent}
Let $A \in \Rnnmax$ be an $n \times n$ matrix over $\Rmax$.
The \textbf{max-plus permanent} of $A$ is
$$
\perm(A) = \bigoplus_{\sigma \in \SG_n} a_{1 \sigma(1)} \cdots a_{n \sigma(n)},
$$
where $\SG_n$ is the group of permutations on $[n] =\{1, \ldots,n \}$.
For subsets $I,J\subseteq [n]$ of equal cardinality
the permanent of the submatrix $A_{I,J} = [a_{i,j}]_{i\in I, j\in J}$
is called the $(I,J)$-\emph{minor} of $A$. The \emph{principal minors}
are the minors corresponding to $I=J$.
\end{definition}

\begin{definition}
The \textbf{characteristic maxpolynomial} of $A$, defined in \cite{C-G83}, is
$$
\chi_A (x) = \perm (x I \oplus A),
$$
where $I$ is the max-plus identity matrix with all entries on the main diagonal being 0 and all off-diagonal entries being $\maxzero$.
This is a polynomial of degree $n$, say $\chi_A(x)=\bigoplus_{k=0}^n c_k x^k$,
with $c_k=\delta_{n-k}(A)$, $k = 0, \ldots, n-1$, 
where
\begin{equation}
  \label{eq:charcoeff}
  \begin{aligned}
  \delta_k(A) & = \bigoplus_{\substack{I\subseteq [n]\\ \abs{I}=k}} \perm(A_{I,I})\\
         &= \bigoplus_{i_1,i_2,\dots,i_k}\bigoplus_{\sigma\in\SG_k} a_{i_1i_{\sigma(1)}} a_{i_2i_{\sigma(2)}}\dotsm a_{i_ki_{\sigma(k)}}
  \end{aligned}
\end{equation}
is the maximal value of the principal minors of order $k$ of $A$.

\end{definition}

As shown in \cite{ABG01}, the (tropical) roots of the characteristic
maxpolynomial $\chi_A (x)$, which are called the \textbf{max-plus eigenvalues} of $A$, can be asymptotically computed from the eigenvalues of an associated parametrized classical matrix with exponential entries.  

\begin{definition}
We define the \textbf{full characteristic maxpolynomial} of $A$ to be
$$
\fullchi{A} (x) = \perm (x 0 \oplus A),
$$
where 0 is the matrix with zeroes in all its entries.
The difference between $\fullchi{A} (x)$ and $\chi_A (x)$ is that in $\fullchi{A} (x)$ the indeterminate $x$ appears in all entries instead of just on the diagonal.
It follows that
$$
\fullchi{A} (x) \geq \chi_A (x)
$$
as maxpolynomial functions.
\end{definition}

Again, this is a polynomial of degree $n$, say $\fullchi{A}(x)=\bigoplus_{k=0}^n d_k x^k$,
whose $k$-th coefficient $d_k=\eta_{n-k}(A)$, $k = 0, \ldots, n-1$, is the maximal value of all minors of order $n-k$ of $A$, that is,
\begin{equation}
  \label{eq:fullcharcoeff2}
\begin{aligned}
   \eta_k(A) &= \bigoplus_{\substack{I,J\subseteq [n]\\ \abs{I}=\abs{J}=k}} \perm(A_{I,J})  \\
         &= \bigoplus_{\substack{i_1,i_2,\dots,i_k\\ j_1,j_2,\dots,j_k}} a_{i_1j_1} a_{i_2j_2}\dotsm a_{i_kj_k},
  \end{aligned}
\end{equation}
where the last sum in \eqref{eq:fullcharcoeff2} runs over all pairs of ordered
$k$-tuples of distinct indices. The relation to the principal minors is
$$
\delta_k(A)\leq \eta_k(A)=\bigoplus_P \delta_k(AP)
$$
where the last sum runs over all permutation matrices.

Similarly to eigenvalues, the (tropical) roots of the full characteristic maxpolynomial $\fullchi{A} (x)$, which are called the max-plus \textbf{singular values} of $A$, can be asymptotically computed from the singular values of an associated parametrized classical matrix with exponential entries (see \cite{Hook14} and \cite{DeSDeM02}).

The characteristic maxpolynomial is not necessarily in full canonical form.
However, this is the case for the full characteristic maxpolynomial, which
makes it a better choice for the investigation of convolutions of maxpolynomials in following sections.
\begin{theorem}
	The full characteristic maxpolynomial is in FCF.
	\label{thm:fullsemi}
\end{theorem}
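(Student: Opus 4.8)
The plan is to verify the concavity criterion of Proposition~\ref{prop:fullcanonical}~(iv) for the coefficient sequence of $\fullchi{A}(x)=\bigoplus_{k=0}^n d_kx^k$. Writing $d_k=\eta_{n-k}(A)$ and reindexing by $j=n-k$, the condition $d_{i-1}-d_i\le d_i-d_{i+1}$ becomes $\eta_{j+1}(A)+\eta_{j-1}(A)\le 2\eta_j(A)$ for $j=1,\dots,n-1$; that is, it suffices to show that the sequence of maximal minors $\bigl(\eta_j(A)\bigr)_{j=0}^{n}$ is concave, where $\eta_0(A)=0$ is the empty minor and $\eta_n(A)=\perm(A)$.

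First I would recast $\eta_j(A)$ combinatorially. By \eqref{eq:fullcharcoeff2} and the definition of the permanent, $\eta_j(A)$ is the maximum of $\sum_{i\in I}a_{i\sigma(i)}$ over all $j$-subsets $I,J\subseteq[n]$ and bijections $\sigma\colon I\to J$. Equivalently, viewing the rows and the columns as the two sides of the complete bipartite graph with edge weights $a_{ij}\in\Rmax$, the quantity $\eta_j(A)$ is the maximum weight of a matching consisting of exactly $j$ edges, a missing (that is, $\maxzero$) edge contributing $-\infty$.

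The heart of the proof is then the concavity of maximal-weight matchings. Let $M$ and $N$ be maximum-weight matchings with $\abs M=j-1$ and $\abs N=j+1$, and consider their multiset union $M\cup N$, in which every vertex has degree at most $2$; it therefore decomposes into vertex-disjoint alternating paths and cycles. On each cycle, and on each even-length path, the $M$- and $N$-edges split into two matchings of equal size, while an odd-length path carries a one-edge surplus on one of the two sides. Since $\abs N-\abs M=2$, there are exactly two more $N$-surplus paths than $M$-surplus paths, so a parity count shows the number of odd paths is even. Distributing their surplus edges evenly between the two sides, I would produce two matchings $M',M''$, each of size exactly $j$, that together use precisely the edges of $M\cup N$; hence $w(M')+w(M'')=w(M)+w(N)=\eta_{j-1}(A)+\eta_{j+1}(A)$. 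Because $w(M'),w(M'')\le\eta_j(A)$, the desired inequality $\eta_{j-1}(A)+\eta_{j+1}(A)\le 2\eta_j(A)$ follows at once.

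The step I expect to require the most care is this rebalancing of the alternating paths: one must check the parity count that guarantees the odd-surplus paths split into two equal groups, so that $M'$ and $M''$ come out with the same cardinality $j$ while the total weight is exactly preserved. A minor additional point is the bookkeeping for $\maxzero=-\infty$ entries under the convention $-\infty-(-\infty)=-\infty$: when $\eta_{j-1}(A)+\eta_{j+1}(A)=-\infty$ the inequality is automatic, and otherwise the repartition forces both $w(M')$ and $w(M'')$ to be finite, so no degenerate cases arise.
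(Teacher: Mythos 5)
Your proof is correct, and it reaches the same intermediate goal as the paper --- concavity of the sequence $\eta_0(A),\eta_1(A),\dots,\eta_n(A)$ of maximal $k$-minors, followed by an appeal to Proposition~\ref{prop:fullcanonical}~(iv) --- but by a genuinely different route. The paper formulates $\eta_k(A)$ as the optimal value of the $k$-cardinality assignment problem, passes to the LP relaxation, invokes total unimodularity to guarantee integrality, and then cites the concavity of the LP value function in the constraint vector. You instead give a direct combinatorial exchange argument: taking optimal matchings $M$, $N$ of sizes $j-1$ and $j+1$, decomposing their multiset union into alternating paths and cycles, and using the parity count $s_N - s_M = 2$ (so the number of odd components is even) to rebalance the components into two size-$j$ matchings whose weights sum to $w(M)+w(N)$. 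This argument is sound --- the components are vertex-disjoint, so each of $M'$, $M''$ is a matching, and the weight identity $w(M')+w(M'')=w(M)+w(N)$ holds because they partition the multiset --- and your handling of the $\maxzero$ entries is adequate, the only point worth making explicit being that $\eta_{j+1}(A)>\maxzero$ forces $\eta_j(A)>\maxzero$, so that the subtractive form of the concavity condition under the convention $-\infty-(-\infty)=-\infty$ follows from the additive one in all degenerate cases. What your approach buys is a self-contained, elementary proof requiring no linear programming machinery; what the paper's approach buys is brevity via citation and the connection to the established theory of $k$-cardinality assignment and network flows.
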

\begin{proof}
 Let $\fullchi{A} (x) = \bigoplus_{k=0}^{n} d_k x^k$, where $d_n = 0$, be the full characteristic maxpolynomial of the $n \times n$ matrix $A$.
 The $(n-k)$th coefficient of $\fullchi{A} (x)$, $d_{n-k}=\eta_k$, $k = 1, \ldots, n$, equals the maximal permanent over all submatrices of $A$ of order $k$, which is the optimal value of the corresponding $k$-cardinality assignment problem \cite{BDM12}.
 As is shown below, the values $\eta_k$ form a concave sequence, that is, the inequalities $\eta_{k+1} - \eta_k \geq \eta_{k+2} - \eta_{k+1}$, $k = 0, \ldots, n-2$ hold.
 By Proposition~\ref{prop:fullcanonical} the full characteristic maxpolynomial $\fullchi{A} (x)$ is in full canonical form.
  
Indeed, the $k$-cardinality assignment problem can be formulated as an integer programming (IP) problem over the variables $x_{i,j}$, $1 \leq i,j \leq n$ as follows  \cite{DellamicoMartello:1997:assignment}:
$$
\sum_{i=1}^n \sum_{j=1}^n a_{ij} x_{ij} \to\max!
$$
subject to the conditions
\begin{subequations}
\begin{align}
  &\sum_{j=1}^n x_{ij} \leq 1 \qquad (i = 1, \ldots, n), \\
  &\sum_{i=1}^n x_{ij} \leq 1 \qquad (j = 1, \ldots, n), \\
  &\sum_{i=1}^n \sum_{j=1}^n x_{ij} = k, \\
  &x_{ij} \in\{0,1\} \qquad (i = 1, \ldots, n, \quad j = 1, \ldots, n).
  \label{eq:xij-int}
\end{align} 
\end{subequations}

If we replace the discrete condition  \eqref{eq:xij-int} by the linear condition
\begin{equation}
    \label{eq:xij-lin}
\tag{\ref{eq:xij-int}'}
x_{ij} \geq 0 \qquad (i = 1, \ldots, n, \quad j = 1, \ldots, n),
\end{equation}
we obtain the corresponding relaxed linear programming (LP) problem.

In general, there is no reason for the solution of the (LP) to
be integer valued and to solve the 
underlying (IP) problem.
This is however the case if the system matrix
is \emph{totally unimodular}, 
see \cite[Lemma~8.2.4]{MatousekGaertner:2007:understanding} or
\cite[Chapter~19]{Schrijver:1986:theory}.
The $k$-assignment problem satisfies this condition
\cite[Theorem~1]{DellamicoMartello:1997:assignment}.
It is a well known fact in the theory of linear programming
that the solution of the LP minimization problem is a convex function
of the constraint vector
\cite[Theorem~5.1]{BertsimasTsitsiklis:1997:introduction}
and it
follows that the sequence $\eta_k$  of solutions of the our
max problem is concave.

It is also beneficial to obtain the same geometric description of the solution set by representing the assignment problem in the language of a network flow problem \cite{DT97}.  
 \end{proof}
\begin{remark}
	The analogy between real-rootedness in standard algebra and the FCF property in max-plus algebra is demonstrated with regard to the characteristic polynomial.
	Given a real square matrix, its (standard) characteristic polynomial need not be real-rooted and likewise its max-plus counterpart need not be in FCF.
	However, as shown in Theorem~\ref{thm:fullsemi}, the full characteristic maxpolynomial is always in FCF.
	Similarly, the full characteristic polynomial $\det (xJ-A)$, where $J$ is the matrix whose entries are all equal to 1, is of degree 1 or 0 and thus always real-rooted.
\end{remark}
\subsection{The Gram characteristic maxpolynomial}
We start with a few definitions.
\begin{definition}
Given an $n \times n$ matrix $A \in \Rnnmax$ we call $G=A^TA$ the \textbf{max-plus Gram matrix} of $A$ and $\perm(G)$ the max-plus \textbf{Gram permanent} of $A$.
\end{definition}
Given two (column) vectors $u, v$ of size $n$, let $\langle u, v\rangle = u^T v$ be their max-plus scalar product.
The max-plus norm of $v$ is $\norm{v} \, = \langle v, v\rangle^{\half}$,
the maximal element of $v$.
Then the max-plus version of the Cauchy-Bunyakovsky-Schwarz inequality holds:
\begin{equation}
\label{eq:cauchy}
\langle u, v\rangle  \, \leq \, \norm{u} \norm{v}
\end{equation}
with equality if and only if the maxima of $u$ and $v$ occur at the same index,
i.e., if there is $1\leq i\leq n$ such that $\norm{u}=u_i$ and $\norm{v}=v_i$.

In the following we denote 
by $a_i$ the
column vectors of $A$.

\begin{proposition}
	Let $A=(a_{ij}) \in \Rnnmax$ and let $G=(g_{ij})=A^TA$ be its Gram matrix.
	Then the Gram permanent of $A$ is
	$$
	\perm(G) = 
	 \, \norm{a_1}^2\, \norm{a_2}^2 \dotsm \norm{a_n}^2
	 = g_{11} g_{22}\cdots g_{nn}.
         $$
	\label{prop:gperm}
\end{proposition}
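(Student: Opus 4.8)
The plan is to show that, among all permutations, the identity attains the maximum defining $\perm(G)$, so that the permanent collapses to the product of the diagonal entries of $G$. First I would record two facts about the Gram matrix $G = A^TA$: it is symmetric, and its diagonal entries are $g_{ii} = \langle a_i, a_i\rangle = \norm{a_i}^2$ directly from the definition of the max-plus norm. This already gives the second equality $g_{11}\cdots g_{nn} = \norm{a_1}^2 \dotsm \norm{a_n}^2$ for free, so the real content is the first equality $\perm(G) = g_{11}\cdots g_{nn}$.

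Next I would expand the permanent according to Definition~\ref{def:permanent} as $\perm(G) = \bigoplus_{\sigma\in\SG_n} g_{1\sigma(1)}\cdots g_{n\sigma(n)}$, which in standard arithmetic reads $\max_{\sigma}\sum_i g_{i\sigma(i)}$. The identity permutation contributes the diagonal product $g_{11}\cdots g_{nn}$, so it suffices to prove that no permutation can exceed the diagonal.

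The key step is the max-plus Cauchy--Bunyakovsky--Schwarz inequality \eqref{eq:cauchy}: for each $i$ we have $g_{i\sigma(i)} = \langle a_i, a_{\sigma(i)}\rangle \leq \norm{a_i}\,\norm{a_{\sigma(i)}}$. Multiplying these bounds over all $i$ (i.e.\ adding in standard arithmetic) and using that $\sigma$ merely permutes the index set, so that $\sum_i \norm{a_{\sigma(i)}} = \sum_i \norm{a_i}$, yields
\[
g_{1\sigma(1)}\cdots g_{n\sigma(n)} \;\leq\; \norm{a_1}^2 \dotsm \norm{a_n}^2
\]
for every $\sigma \in \SG_n$. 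Since equality holds for $\sigma = \mathrm{id}$ (as $g_{ii} = \norm{a_i}^2$), the maximum over $\sigma$ is attained on the diagonal and equals $\norm{a_1}^2 \dotsm \norm{a_n}^2$, which proves the claim.

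The proof is short, and there is no serious obstacle; the only points requiring care are the bookkeeping between the formal max-plus exponent $\norm{a_i}^2 = \norm{a_i}\odot\norm{a_i}$ and its standard-arithmetic meaning $2\norm{a_i}$, and the verification that the reindexing $\sum_i \norm{a_{\sigma(i)}} = \sum_i \norm{a_i}$ is legitimate precisely because $\sigma$ is a bijection. If desired, one could further invoke the equality case of \eqref{eq:cauchy} to describe exactly which other permutations also attain the maximum, but this is not needed for the stated identity.
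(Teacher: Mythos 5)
Your proof is correct and follows essentially the same route as the paper's: both expand the permanent, bound each factor $g_{i\sigma(i)}=\langle a_i,a_{\sigma(i)}\rangle$ by the max-plus Cauchy--Bunyakovsky--Schwarz inequality \eqref{eq:cauchy}, and observe that the permuted norms rearrange to give $\norm{a_1}^2\dotsm\norm{a_n}^2$, attained at the identity. No gaps.
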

\begin{proof}
	The entries of the matrix $G$ are
	\begin{equation*}
	g_{ij} = \langle a_i, a_j\rangle
	\end{equation*}
	and 
	\begin{equation*}
	\perm(G) = \bigoplus_{\pi \in \SG_n} \langle a_1, a_{\pi(1)} \rangle\langle a_2, a_{\pi(2)} \rangle\dotsm\langle a_n, a_{\pi(n)} \rangle
	.
	\end{equation*}
	Now, for a fixed permutation $\pi$ we apply the inequality \eqref{eq:cauchy} and obtain
	\begin{align*}
	\langle a_1, a_{\pi(1)} \rangle\langle a_2, a_{\pi(2)} \rangle\dotsm\langle    a_n, a_{\pi(n)} \rangle
	&\leq \norm{a_1}\norm{a_{\pi(1)}}
	\norm{a_2}\norm{a_{\pi(2)}}
	\dotsm
	\norm{a_n}\norm{a_{\pi(n)}}
	\\
	&= \norm{a_1}^2\,
	\norm{a_2}^2
	\dotsm
	\norm{a_n}^2\\
	&= g_{11}g_{22}\dotsm g_{nn},
	\end{align*}
	i.e., the maximal value is attained for $\pi=1$.
\end{proof}

Given a matrix $M$, we denote by $M^{\circ \half}$ the Hadamard root of $M$, i.e., the result of multiplying (in standard arithmetic) $M$ by the scalar $\half$.
For more on Hadamard product and Hadamard power of matrices see Subsection~\ref{subsec:HPmatrices}.

\begin{definition}
We denote by $\widehat{A}$ the Hadamard root of the Gram matrix of $A$,
$$
\widehat{A} = \hhalf{G} =\hhalf{(A^TA)},
$$ and call $\chi_{\widehat{A}} (x)$  the \textbf{Gram characteristic maxpolynomial} of $A$.
\end{definition}
The same arguments as in the proof of Proposition~\ref{prop:gperm} together with (\ref{eq:charcoeff}) show that we only need to consider the diagonal elements of $\widehat{A}$, that is, $\chi_{\widehat{A}} (x)$ is the FCF maxpolynomial
\begin{equation}
\label{eq:maxrow}
\begin{aligned}
\chi_{\widehat{A}} (x) &= (x \oplus \widehat{A}_{11})(x \oplus \widehat{A}_{22}) \cdots  (x \oplus \widehat{A}_{nn}) \\
&= (x \oplus m_1)(x \oplus m_2) \cdots  (x \oplus m_n),
\end{aligned}
\end{equation}
where $m_i = \norm{a_i}$ is the maximal element of column $i$ of $A$.
If follows that
$$
\chi_{\widehat{A}} (x) \geq \fullchi{A} (x)
$$
as maxpolynomial functions.

\section{Max convolution}
\label{sec:maxconv}
\subsection{Max convolution of maxpolynomials}
\begin{definition}
	Given two maxpolynomials $p(x)$, $q(x)$, their (formal) \textbf{max convolution} of order $k$ (or $k$-th max convolution) is the $k$-th derivative of their max-plus product: 
$$
(p \maxconv{k} q)(x) = (pq)^{(k)}(x).
$$
\end{definition}
If $p(x) = \bigoplus_{i=0}^{m} a_i x^i$, $q(x) = \bigoplus_{i=0}^{n} b_i x^i$ then
\begin{align*}
	(p \maxconv{k} q)(x)
	&= ( \bigoplus_{l=0}^{m+n} (\bigoplus_{i+j=l} a_i b_j) x^l )^{(k)} \\
	&= \bigoplus_{l=0}^{m+n-k} (\bigoplus_{i+j=l+k} a_i b_j) x^l.
\end{align*}
By Leibniz's rule (Proposition \ref{prop:leibniz}),
$$
(p \maxconv{k} q)(x) = \bigoplus_{i=0}^{k} p^{(i)}(x) \,q^{(k-i)}(x).
$$

For example, if $q(x) = 0$, the zero maxpolynomial, then $(p \maxconv{k} 0)(x)$ is the $k$-th derivative of $p(x)$, and if $k = 0$ then $(p \maxconv{0} q)(x)$ is the product of $p(x)$ and $q(x)$.
\begin{remark}
  We defined max convolution is on a  formal level since it relies on the formal derivative. A version respecting functional equivalence would use the functional version of the derivative, i.e.,
 we would first have to replace both $p(x)$ and $q(x)$ by their FCF representatives.
 Since our results below refer to maxpolynomials which are already in FCF, both definitions of max convolution coincide.
\end{remark}
\begin{proposition}
  For any FCF maxpolynomials $p(x), q(x)$ of degree $m, n$ respectively the following holds.
        \begin{enumerate}[(i)]
         \item 
          For every $k < m+n$ the max-convolution polynomial $(p \maxconv{k} q)(x)$ is an FCF maxpolynomial of degree $m+n-k$.
         \item 
          The roots of $(p \maxconv{k} q)(x)$ are the maximal $m+n-k$ roots (including multiplicities) among the roots of $p(x)$ and the roots of $q(x)$.
         \item
         In particular, when $p(x) = \bigodot_{i=1}^n (x \oplus r_i)$ and $q(x) = \bigodot_{i=1}^n (x \oplus s_i)$ then
          \begin{equation}
            (p \maxconv{n} q)(x) = \bigoplus_{\sigma \in \SG_n} \bigodot_{i=1}^n (x \oplus r_i \oplus s_{\sigma(i)}).
            \label{eq:maxroots}
          \end{equation}
        \end{enumerate}
	\label{prop:maxroots}
	\end{proposition}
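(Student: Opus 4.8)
The plan is to handle the three parts in sequence, leaning throughout on the fact (Proposition~\ref{prop:qe}) that both the max-plus product and the formal derivative preserve the FCF property. For (i), since $p$ and $q$ are FCF their product $pq$ is FCF of degree $m+n$ by Proposition~\ref{prop:qe}(ii), and each application of $\partial_x$ keeps the result FCF by Proposition~\ref{prop:qe}(i); while the polynomial is non-constant a derivative lowers the degree by exactly one, so after $k<m+n$ steps we obtain an FCF maxpolynomial of degree $m+n-k\geq 1$. For (ii), the factorization $(pq)(x)=a_m b_n \bigodot_{i}(x\oplus r_i)\bigodot_{j}(x\oplus s_j)$ furnished by Proposition~\ref{prop:qe} shows that the roots of $pq$ are the multiset union of the roots of $p$ and of $q$; iterating Proposition~\ref{prop:rootsderivative}, which deletes the smallest root under a single derivative, $k$ times then deletes the $k$ smallest of these combined roots and leaves exactly the largest $m+n-k$ of them, counted with multiplicity.

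The content lies in (iii), where the plan is a coefficient comparison. First I would recognize the right-hand side of \eqref{eq:maxroots} as a full characteristic maxpolynomial: putting $C_{ij}=r_i\oplus s_j$, the matrix $x0\oplus C$ has $(i,j)$-entry $x\oplus r_i\oplus s_j$, so that $\bigoplus_{\sigma\in\SG_n}\bigodot_{i=1}^n(x\oplus r_i\oplus s_{\sigma(i)})=\perm(x0\oplus C)=\fullchi{C}(x)$. By Theorem~\ref{thm:fullsemi} this is in FCF, and its coefficient of $x^{n-k}$ is $\eta_k(C)$. On the other side, parts (i) and (ii) identify $(p\maxconv{n}q)(x)$ as the monic FCF maxpolynomial whose roots are the largest $n$ among the $2n$ values $r_1,\dots,r_n,s_1,\dots,s_n$, so by Proposition~\ref{prop:fullcanonical} its coefficient of $x^{n-k}$ is the max-plus product (ordinary sum) of the $k$ largest of these values. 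Thus (iii) reduces to the single identity that $\eta_k(C)$ equals the sum of the $k$ largest entries of the list $r_1,\dots,r_n,s_1,\dots,s_n$.

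This identity is the main obstacle, and I would establish it by two inequalities. For the inequality ``$\leq$'', every order-$k$ minor $\perm(C_{I,J})$ is a maximum over bijections $\phi\colon I\to J$ of $\bigodot_{i\in I}\max(r_i,s_{\phi(i)})$, and each factor contributes either an $r_i$ (with distinct row indices $i$) or an $s_{\phi(i)}$ (with distinct column indices $\phi(i)$); hence this max-plus product is an ordinary sum of $k$ pairwise distinct entries of the combined list and cannot exceed the sum of its $k$ largest. For the inequality ``$\geq$'', I would exhibit a minor attaining the bound: write the $k$ largest entries as $p$ of the $r$'s, indexed by a set $A$, together with $q=k-p$ of the $s$'s, indexed by $B$; since $k\leq n$ I may choose extra row indices $A'$ disjoint from $A$ with $\abs{A'}=q$ and extra column indices $B'$ disjoint from $B$ with $\abs{B'}=p$, set $I=A\cup A'$ and $J=B\cup B'$, and take the bijection mapping $A$ onto $B'$ and $A'$ onto $B$. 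Because every index lying outside the chosen top-$k$ set carries a value at most the threshold value, each $\max$ then returns exactly the intended large entry, so this minor equals the sum of the $k$ largest; a tie at the threshold is harmless, as the $\max$ still returns the wanted value. Matching the coefficients of $x^{n-k}$ on the two sides then gives the formal identity \eqref{eq:maxroots} and completes (iii).
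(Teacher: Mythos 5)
Your proof is correct. Parts (i) and (ii) follow exactly the paper's route: product is FCF by Proposition~\ref{prop:qe}, roots of the product are the multiset union, and iterating Proposition~\ref{prop:rootsderivative} strips off the $k$ smallest roots. For part (iii), however, you take a genuinely different path. The paper argues at the level of whole polynomials: it observes (without detailed proof) that some permutation $\sigma_0$ pairs the $r_i$ with the $s_j$ so that the values $r_i\oplus s_{\sigma_0(i)}$ are precisely the $n$ largest of the $2n$ numbers, and then notes that $\bigodot_i(x\oplus r_i\oplus s_{\sigma_0(i)})$ dominates every other summand coefficientwise, so the big $\oplus$ collapses to that single term. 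You instead work coefficient by coefficient: you recognize the right-hand side as $\fullchi{C}(x)=\perm(x0\oplus C)$ for $C_{ij}=r_i\oplus s_j$, reduce the claim to the identity that $\eta_k(C)$ equals the sum of the $k$ largest entries of the combined list, and prove that identity by a two-sided assignment argument (distinctness of row and column indices for the upper bound, an explicit $I,J,\phi$ for the lower bound). Your lower-bound construction is in substance the same pairing the paper's $\sigma_0$ encodes, so your version makes explicit the step the paper dismisses as clear; the price is length, and a reliance on Theorem~\ref{thm:fullsemi} and the $\eta_k$ machinery that the paper only deploys later, in Theorem~\ref{thm:maxconv}. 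Both arguments are sound; yours essentially previews the proof technique of that later theorem, while the paper's is shorter and self-contained at this point in the text.
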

\begin{proof}
    The product $(pq)(x)$ is a maxpolynomial of degree $m+n$ which is an FCF maxpolynomial by Proposition~\ref{prop:qe}.
    Its set of roots is the union of the roots of $p(x)$ and the roots of $q(x)$ (counting multiplicity).
	The $k$-th max convolution, i.e., the $k$-th derivative $(p \maxconv{k}q)(x) = (pq)^{(k)}(x)$ is again an FCF maxpolynomial by Proposition~\ref{prop:qe} and has degree $m+n-k$.
	By iterated application of Proposition~\ref{prop:rootsderivative} it follows that its roots are the $m+n-k$ maximal roots of $(pq)(x)$.
	
	As for \eqref{eq:maxroots}, it is clear that there exists a permutation $\sigma_0$ of $[n]$, such that $r_1 \oplus s_{\sigma_0(1)}$, $r_2 \oplus s_{\sigma_0(2)}, \ldots$, $r_n \oplus s_{\sigma_0(n)}$ are the $n$ maximal numbers among $r_1, \ldots, r_n$, $s_1, \ldots, s_n$.
	Since each coefficient of $\bigodot_{i=1}^n (x \oplus r_i \oplus s_{\sigma_0(i)})$ is greater than or equal to the corresponding coefficient of any other maxpolynomial $\bigodot_{i=1}^n (x \oplus r_i \oplus s_{\sigma(i)})$, the result follows.	
\end{proof}

The following properties of the max convolution are easily verified. 
\begin{proposition}
Let $p(x), p_1(x), p_2(x), q(x)$ be maxpolynomials. Then 
	\begin{enumerate}[(i)]
		\item Commutativity: $(p \maxconv{k} q)(x) = (q \maxconv{k} p)(x)$.
		\item Distributivity: $((p_1 \oplus p_2) \maxconv{k} q)(x) = (p_1 \maxconv{k} q)(x) \oplus (p_2 \maxconv{k} q)(x)$.
		\item Homogeneity: $(p_1 p_2 \maxconv{k} q)(x) = (p_1 \maxconv{k} p_2 q) (x)$.
		\item Leibniz's rule: $(p \maxconv{k} q)'(x) = (p \maxconv{k+1} q)(x) = (p' \maxconv{k} q)(x) \oplus (p \maxconv{k} q')(x)$.
	\end{enumerate}
	\label{prop:maxconvprop}
\end{proposition}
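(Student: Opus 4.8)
The plan is to reduce each of the four identities to structural facts already established: the commutativity and associativity of multiplication in $\Rxmax$, the distributivity of multiplication over $\oplus$, the $\oplus$-linearity of the formal derivative $\partial_x$ (axiom (iii) of the derivation), and the single-step product rule $(pq)'(x) = (p'q)(x) \oplus (pq')(x)$ (axiom (iv)). Since $(p \maxconv{k} q)(x)$ is by definition $(pq)^{(k)}(x)$, in each case I would unwind the definition, invoke one of these facts, and fold the definition back up. For commutativity (i) I would simply note that $pq = qp$ in $\Rxmax$, so $(pq)^{(k)} = (qp)^{(k)}$, which is exactly $(p \maxconv{k} q)(x) = (q \maxconv{k} p)(x)$. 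For homogeneity (iii) the analogous one-line argument uses associativity: $(p_1 p_2)q = p_1(p_2 q)$ gives $\bigl((p_1 p_2)q\bigr)^{(k)} = \bigl(p_1(p_2 q)\bigr)^{(k)}$, i.e. $(p_1 p_2 \maxconv{k} q)(x) = (p_1 \maxconv{k} p_2 q)(x)$.

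For distributivity (ii) I would first use distributivity of the product to rewrite $(p_1 \oplus p_2)q = p_1 q \oplus p_2 q$, and then apply the derivative. Because $\partial_x$ is $\oplus$-linear, so is its $k$-fold iterate $\partial_x^k$; this is the one place a short induction on $k$ is required, the base case being axiom (iii). Consequently $(p_1 q \oplus p_2 q)^{(k)} = (p_1 q)^{(k)} \oplus (p_2 q)^{(k)}$, which is precisely $((p_1 \oplus p_2) \maxconv{k} q)(x) = (p_1 \maxconv{k} q)(x) \oplus (p_2 \maxconv{k} q)(x)$.

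For the Leibniz rule (iv) the first equality $(p \maxconv{k} q)'(x) = (p \maxconv{k+1} q)(x)$ is just the identity $\bigl((pq)^{(k)}\bigr)' = (pq)^{(k+1)}$, immediate from the definition of the iterated derivative. For the second equality I would apply the single-step product rule to obtain $(pq)' = p'q \oplus pq'$, differentiate $k$ further times, and once more invoke the $\oplus$-linearity of $\partial_x^k$, giving $(pq)^{(k+1)} = (p'q \oplus pq')^{(k)} = (p'q)^{(k)} \oplus (pq')^{(k)} = (p' \maxconv{k} q)(x) \oplus (p \maxconv{k} q')(x)$.

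None of the steps presents a genuine obstacle; the only point requiring the slightest care is the iterated $\oplus$-linearity of the derivative used in (ii) and (iv), which follows by an immediate induction from axiom (iii), together with the bookkeeping of which factor the extra derivative acts on in (iv). As an alternative to the inductive argument, one could derive (iv) directly from the expanded formula of Proposition~\ref{prop:leibniz}, splitting the sum $\bigoplus_{i=0}^{k+1} p^{(i)}(x)\, q^{(k+1-i)}(x)$ into the terms reindexing to $p' \maxconv{k} q$ and those reindexing to $p \maxconv{k} q'$.
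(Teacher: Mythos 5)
Your proof is correct and is exactly the routine verification the paper has in mind: the paper itself offers no argument, stating only that these properties ``are easily verified,'' and your reduction of each item to commutativity, associativity and distributivity of the semiring $\Rxmax$ together with the $\oplus$-linearity of $\partial_x$ (iterated by a trivial induction) and the one-step product rule is the natural way to fill in that omission. No gaps.
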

Associativity does not hold in general, but it is satisfied under the following
condition,  including the case where all maxpolynomials are of degree $n$ and the max convolution is of order $n$.
\begin{proposition}[Associativity]
	Let $p_1(x), p_2(x), p_3(x)$ be maxpolynomials of degrees $n_1, n_2, n_3 \leq n$, respectively.
	Then
	\begin{enumerate}[(i)]
		\item $((p_1 \maxconv{n} p_2) \maxconv{n} p_3)(x) = (p_1 \maxconv{n} (p_2 \maxconv{n} p_3))(x)$.
		\item If $p_1(x)$, $p_2(x)$, $p_3(x)$ are FCF maxpolynomials then the roots of $(p_1 \maxconv{n} p_2 \maxconv{n} p_3)(x)$ are the maximal $n_1+n_2+n_3 - 2n$ roots (including multiplicities) among the roots of $p_1(x)$, $p_2(x)$, $p_3(x)$.
	\end{enumerate}
	\label{prop:assoc}
\end{proposition}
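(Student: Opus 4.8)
The plan is to reduce everything to a single clean identity: under the stated degree hypotheses, both ways of associating the triple $n$-convolution coincide with the $2n$-th derivative of the ordinary product $p_1 p_2 p_3$. Once this is in place, part (ii) follows at once from the way the formal derivative acts on FCF maxpolynomials.

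For part (i) I would argue entirely on the level of coefficients, writing $p_1(x) = \bigoplus_i a_i x^i$, $p_2(x) = \bigoplus_j b_j x^j$, $p_3(x) = \bigoplus_l e_l x^l$. Expanding $((p_1 \maxconv{n} p_2) \maxconv{n} p_3)(x) = ((p_1 p_2)^{(n)} p_3)^{(n)}(x)$ with the product-coefficient formula and the fact that $\partial_x$ shifts coefficients down by one index, the coefficient of $x^m$ comes out as $\bigoplus a_i b_j e_l$ over all $i+j+l = m+2n$ subject to the extra constraint $i+j \geq n$; this constraint is exactly the residue of having differentiated $p_1 p_2$ before multiplying by $p_3$. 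Expanding the other association $(p_1 \maxconv{n}(p_2 \maxconv{n} p_3))(x)$ gives the same sum but with the constraint $j+l \geq n$ instead. These two constraints are genuinely different, which is precisely why associativity can fail for convolutions of arbitrary order; the role of the hypotheses is to render each of them vacuous. Indeed, since $l \leq n_3 \leq n$ and $i+j+l = m+2n \geq 2n$, one has $i+j = m+2n-l \geq m+n \geq n$ automatically, while $i \leq n_1 \leq n$ forces $j+l \geq n$. Hence both associations collapse to the unconstrained sum $\bigoplus_m (\bigoplus_{i+j+l = m+2n} a_i b_j e_l) x^m$, which is exactly $(p_1 p_2 p_3)^{(2n)}(x)$.

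For part (ii) I would first invoke Proposition~\ref{prop:qe}(ii) twice to conclude that $p_1 p_2 p_3$ is an FCF maxpolynomial of degree $n_1+n_2+n_3$ whose multiset of roots is the union (with multiplicities) of the roots of $p_1$, $p_2$ and $p_3$. By the identity from part (i) the triple convolution equals $(p_1 p_2 p_3)^{(2n)}(x)$, so I would then apply Proposition~\ref{prop:rootsderivative} $2n$ times: each differentiation of an FCF maxpolynomial deletes the current smallest root and preserves the FCF property. After $2n$ steps the surviving roots are exactly the largest $n_1+n_2+n_3-2n$ among the combined roots, as claimed. The degenerate cases $n_1+n_2+n_3 \leq 2n$ are absorbed by the convention that over-differentiation yields a constant or the null maxpolynomial.

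The main obstacle — in fact the only subtle point — is the index bookkeeping in part (i): one must correctly detect that nesting ``differentiate-then-multiply'' leaves behind a \emph{one-sided} restriction ($i+j \geq n$ versus $j+l \geq n$), and then check that the order $k=n$ together with $n_3 \leq n$ and $n_1 \leq n$ respectively makes each restriction redundant. I would phrase the coefficient extraction so that terms with out-of-range indices are automatically $\maxzero$, so that no separate boundary case analysis is needed and the whole argument reduces to a one-line comparison of two index sets.
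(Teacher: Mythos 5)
Your proof is correct and follows essentially the same route as the paper's: both arguments reduce each association, on the level of coefficients, to the symmetric sum $\bigoplus_{i+j+l=m+2n} a_i b_j e_l$, and then deduce (ii) from the FCF property of products together with the root-deleting action of the derivative (Propositions~\ref{prop:qe} and~\ref{prop:rootsderivative}, which is exactly what Proposition~\ref{prop:maxroots} packages). If anything, your version is slightly tighter: by identifying both sides with $(p_1p_2p_3)^{(2n)}(x)$ and checking explicitly that the residual one-sided constraints $i+j\geq n$ and $j+l\geq n$ are vacuous under $n_3\leq n$ and $n_1\leq n$, you make explicit a step the paper passes over in silence and you absorb the degenerate case $n_1+n_2+n_3<2n$, which the paper handles by a separate case distinction.
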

\begin{proof}
  \begin{enumerate}[(i)]
   \item 
  We distinguish two cases.
  First assume that $n_1+n_2+n_3 < 2n$. 
  Then $((p_1 \maxconv{n} p_2) \maxconv{n} p_3)(x) = \maxzero$ for the following reason:
	either $n_1+n_2 < n$ and then $(p_1 \maxconv{n} p_2)(x)$ is formed by
        taking the $n$-th derivative of a maxpolynomial of degree $n_1+n_2 <
        n$, which gives $\maxzero$, and consequently 
        $((p_1 \maxconv{n} p_2) \maxconv{n} p_3)(x) = (\maxzero \maxconv{n} p_3)(x) = \maxzero$;
	otherwise $\deg (p_1 \maxconv{n} p_2) = n_1+n_2-n \geq 0$, and then in
        $((p_1 \maxconv{n} p_2) \maxconv{n} p_3)(x)$ 
        we take the derivative of order $n$ of a maxpolynomial of degree
        $(n_1+n_2-n) +n_3 < n$, which again results in $\maxzero$.
	Similarly, $(p_1 \maxconv{n} (p_2 \maxconv{n} p_3))(x) = \maxzero$, and associativity holds.
	
	Assume now that $2n \leq n_1+n_2+n_3 \leq 3n$. Then necessarily $n_1+n_2 \geq n$.
	Let $p_1(x) = \bigoplus_{i=0}^{n_1} a_i x^i$,  $p_2(x) = \bigoplus_{i=0}^{n_2} b_i x^i$ and $p_3(x) = \bigoplus_{i=0}^{n_3} c_i x^i$.
	The coefficients of $(p_1 \maxconv{n} p_2)(x) = (p_1 p_2)^{(n)}(x)$ are the leading $n_1+n_2+1-n$ coefficients of $(p_1 p_2)(x)$.
	That is,
	$$
	(p_1 \maxconv{n} p_2)(x) = \bigoplus_{i=0}^{n_1 + n_2 -n} d_i x^i = \bigoplus_{i=0}^{n_1 + n_2 -n} \Bigl(\smashoperator{\bigoplus_{\substack{j,k \\ j+k=n+i}}} a_j b_k \Bigr) x^i.
	$$
	Then the coefficients of $((p_1 \maxconv{n} p_2) \maxconv{n} p_3)(x)$ are the $n_1+n_2+n_3+1-2n$ leading coefficients of $(p_1 p_2 p_3)(x)$:
        \begin{equation}
		\label{eq:assoc}	
	\begin{aligned}
		((p_1 \maxconv{n} p_2) \maxconv{n} p_3)(x) &= \bigoplus_{i=0}^{n_1+n_2+n_3-2n} \Bigl(\smashoperator{\bigoplus_{\substack{m,l \\ m+l=n+i}}} d_m c_l \Bigr) x^i 
		 \\
		&= \bigoplus_{i=0}^{n_1+n_2+n_3-2n} \Bigl(\smashoperator[l]{\bigoplus_{\substack{m,l \\ m+l=n+i}}} \phantom{x}\Bigl(\smashoperator{\bigoplus_{\substack{j,k \\ j+k=n+m}}} a_j b_k\Bigr) c_l \Bigr) x^i 		 \\
		&= \bigoplus_{i=0}^{n_1+n_2+n_3-2n} \Bigl(\smashoperator{\bigoplus_{\substack{j,k,l \\ j+k=n+(n+i-l)}}} a_j b_k c_l \Bigr) x^i	\\
		&= \bigoplus_{i=0}^{n_1+n_2+n_3-2n} \Bigl(\smashoperator{\bigoplus_{\substack{j,k,l \\ j+k+l=2n+i}}} a_j b_k c_l \Bigr) x^i.
	\end{aligned}
        \end{equation}
	
	By the symmetry of the last expression of \eqref{eq:assoc} and by commutativity, the order in which
        the 3 maxpolynomials are max convolved is irrelevant and associativity
        follows. 
       \item 
	This is clear by Proposition~\ref{prop:maxroots}.
  \end{enumerate}
\end{proof}

We remark that when $A_i$, $i =0, \ldots, k$, are $n\times n$ matrices over $\Rmax$, with full characteristic maxpolynomials $p_i(x)$, $i =0, \ldots, k$, then
$$
(p_0 \maxconv{n} p_1 \maxconv{n} \cdots \maxconv{n} p_k)(x) = \bigoplus_{\substack{P_i, Q_i \in \cP_n \\ i =1, \ldots, k}}  \mathrm{perm} (x 0 \oplus A_0 \oplus P_1 A_1 Q_1 \oplus \cdots \oplus P_k A_k Q_k),
$$
whose roots are the maximal $n$ roots (including multiplicities) among the roots of $p_i(x)$, $i =0, \ldots, k$.
This follows from Theorem \ref{thm:maxconv} below.
\subsection{Max convolution of characteristic maxpolynomials}
Let $\cP_n$ be the group of $n \times n$ max-plus permutation matrices, which are similar to the standard permutation matrices, except that the entries are assigned the values $0$ and $\maxzero$ instead of $1$ and $0$, respectively.
The permutation matrices are the orthogonal matrices in the max-plus setting. 
Given two matrices $A, B \in \Rnnmax$, we show next that the $n$-th max convolution of their full characteristic maxpolynomials equals the maximum ($\oplus$), over all permutation matrices $P, Q \in \cP_n$, of the set of full characteristic maxpolynomials of $A \oplus PBQ$.
Recall from \eqref{eq:fullcharcoeff2} that by $\eta_k(A)$ we denote the maximal
value of all minors of order $k$ of a matrix $A$.
\begin{theorem}
  \label{thm:maxconv}
  Given matrices $A, B$ of order $n$ over $\Rmax$,
  let $p(x)=\fullchi{A}(x)$, $q(x)=\fullchi{B}(x)$ be the corresponding full characteristic maxpolynomials and let \\
$(p \maxconv{n} q)(x) =  \sum_{k=0}^n d_kx^k$ be their max convolution.
  \begin{enumerate}[(i)]
   \item The max convolution can be written as
        \begin{equation}
          \label{eq:fullcharconvolution}
		(p \maxconv{n} q)(x) = \bigoplus_{P,Q \in \cP_n}  \fullchi{A \oplus PBQ}(x).
        \end{equation}
       \item The coefficients of the max convolution are given by
        \begin{equation}
          \label{eq:maxconv:ck}
          d_{n-k} = \bigoplus_{l=0}^k \eta_l(A)\,\eta_{k-l}(B).
        \end{equation}
       \item
        \label{it3:thm:maxconv}
        The roots of $(p \maxconv{n} q)(x)$ are the maximal $n$ roots
        (including multiplicities) among the roots of $p(x)$ and $q(x)$.
  \end{enumerate}
\end{theorem}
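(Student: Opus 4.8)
The plan is to prove the three parts in the order (ii), (i), (iii): part (ii) isolates the coefficients of the convolution, part (i) then reduces to matching those coefficients against a combinatorial maximum over permutation matrices, and part (iii) follows immediately from the general convolution result already established in Proposition~\ref{prop:maxroots}.

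First I would settle (ii) by a direct computation with coefficients. Writing $p(x)=\bigoplus_{i=0}^n \eta_{n-i}(A)\,x^i$ and $q(x)=\bigoplus_{j=0}^n \eta_{n-j}(B)\,x^j$ (using the description $d_k=\eta_{n-k}$ of the full characteristic coefficients and the convention $\eta_0=\maxone$), the coefficient of $x^l$ in the product $(pq)(x)$ is $\bigoplus_{i+j=l}\eta_{n-i}(A)\,\eta_{n-j}(B)$. Taking the $n$-th max-plus derivative shifts indices down by $n$, so the coefficient $d_{n-k}$ of $x^{n-k}$ in $(p\maxconv{n}q)(x)=(pq)^{(n)}(x)$ is the coefficient of $x^{2n-k}$ in $(pq)(x)$; the substitution $l=n-i$ (so that $n-j=k-l$, with $0\le l\le k$ forced by $0\le i,j\le n$) turns this into $\bigoplus_{l=0}^k\eta_l(A)\,\eta_{k-l}(B)$, which is exactly \eqref{eq:maxconv:ck}.

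For (i) I would show that the formal maxpolynomial on the right-hand side of \eqref{eq:fullcharconvolution} has the same coefficients. Its coefficient of $x^{n-k}$ is $\bigoplus_{P,Q\in\cP_n}\eta_k(A\oplus PBQ)$, since $\fullchi{C}(x)$ carries $\eta_k(C)$ as its coefficient of $x^{n-k}$ and the max of maxpolynomials is taken coefficientwise. The key step is to expand $\eta_k(A\oplus PBQ)$: writing $PBQ$ as the matrix with entries $b_{\pi(i),\rho(j)}$ (where $(\pi,\rho)$ runs over all of $\SG_n\times\SG_n$ as $(P,Q)$ runs over $\cP_n\times\cP_n$), each minor $\perm((A\oplus PBQ)_{I,J})=\bigoplus_{\sigma}\bigodot_{i\in I}\bigl(a_{i\sigma(i)}\oplus b_{\pi(i),\rho(\sigma(i))}\bigr)$ distributes, by the distributivity of $\odot$ over $\oplus$, into a maximum over subsets $S\subseteq I$ of products $\bigl(\bigodot_{i\in S}a_{i\sigma(i)}\bigr)\bigl(\bigodot_{i\in I\setminus S}b_{\pi(i),\rho(\sigma(i))}\bigr)$. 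Because all operations are maxima, I can pull the maximum over $\pi,\rho$ inside each term; since $\pi$ and $\rho$ act as independent free permutations, the factor involving $B$ ranges over all order-$(k-|S|)$ minors of $B$ with an arbitrary bijection, so $\bigoplus_{\pi,\rho}\bigodot_{i\in I\setminus S}b_{\pi(i),\rho(\sigma(i))}=\eta_{k-|S|}(B)$, independently of the $A$-factor. Maximizing the remaining $A$-factor $\bigodot_{i\in S}a_{i\sigma(i)}$ over all choices of $I,J,\sigma$ and of $S$ with $|S|=l$ gives $\eta_l(A)$ (any extension of $S$ to an index set $I$ of size $k$ is admissible), and summing over $l=0,\dots,k$ yields $\bigoplus_{P,Q}\eta_k(A\oplus PBQ)=\bigoplus_{l=0}^k\eta_l(A)\,\eta_{k-l}(B)=d_{n-k}$, matching (ii). Hence the two formal maxpolynomials coincide.

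Finally, (iii) is immediate: $p(x)$ and $q(x)$ are FCF by Theorem~\ref{thm:fullsemi}, both of degree $n$, so Proposition~\ref{prop:maxroots}(ii) applied with $m=n$ and convolution order $n<2n$ gives that the roots of $(p\maxconv{n}q)(x)$ are the maximal $(n+n)-n=n$ roots, counted with multiplicity, among the combined roots of $p(x)$ and $q(x)$. The main obstacle is the bookkeeping in (i): one must verify that the independent freedom in $\pi$ and $\rho$ genuinely decouples the $A$- and $B$-contributions, so that the two maxima factor cleanly as $\eta_l(A)\,\eta_{k-l}(B)$, with no hidden compatibility constraint linking the row- and column-selections of the $B$-minor to those of the $A$-minor.
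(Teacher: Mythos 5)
Your proposal is correct and follows essentially the same route as the paper: the heart of both arguments is expanding $\eta_k(A\oplus PBQ)$ over permutations, distributing the products, and using the free, independent choice of the two permutations to decouple the $A$- and $B$-minors into $\bigoplus_{l}\eta_l(A)\,\eta_{k-l}(B)$, with (iii) then following from Theorem~\ref{thm:fullsemi} and Proposition~\ref{prop:maxroots}. The only difference is organizational (you establish (ii) first and then match coefficients for (i), whereas the paper derives both in one chain), and your explicit verification of the decoupling step is sound.
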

\begin{proof}
  From \eqref{eq:fullcharcoeff2} we infer that the coefficients $c_k$ are certain maximal minors.
  To be specific, 
  \begin{align*}
    c_{n-k} &= \bigoplus_{P,Q} \eta_{k}(A\oplus PBQ)\\
           &= \bigoplus_{\sigma,\tau\in\SG_n}
              \bigoplus_{\substack{i_1,i_2,\dots,i_k\\ j_1,j_2,\dots,j_k}}
              (a_{i_1j_1} \oplus b_{\sigma(i_1)\tau(j_1)})
              (a_{i_2j_2} \oplus b_{\sigma(i_2)\tau(j_2)})    
              \dotsm
              (a_{i_kj_k} \oplus b_{\sigma(i_k)\tau(j_k)}),
    \intertext{
    where $\sigma$ and $\tau$ are the inverses of the permutations induced by the respective permutation
    matrices $P$ and $Q$, 
    and where both tuples $(i_1,i_2,\dots,i_k)$ and $(j_1,j_2,\dots,j_k)$ consist of distinct indices.
    Using the latter fact, we can expand the products, regroup and relabel the indices to obtain
}
           &= \bigoplus_{\sigma,\tau\in\SG_n}
              \bigoplus_{l=0}^k
              \bigoplus_{\substack{i_1,i_2,\dots,i_k\\
                                   j_1,j_2,\dots,j_k}}
              a_{i_1j_1}
              a_{i_2j_2}
              \dotsm
              a_{i_lj_l}
b_{\sigma(i_{l+1})\tau(j_{l+1})}
b_{\sigma(i_{l+2})\tau(j_{l+2})}
\dotsm
b_{\sigma(i_k)\tau(j_k)}
              ;
\intertext{now $\sigma$ and $\tau$ are arbitrary permutations and after
    removing duplicated summands we remain with}
           &= \bigoplus_{l=0}^k
              \bigoplus_{\substack{i_1,i_2,\dots,i_l\\
                                   j_1,j_2,\dots,j_l\\
                                   i'_1,i'_2,\dots,i'_{k-l}\\
                                   j'_1,j'_2,\dots,j'_{k-l}}}
              a_{i_1j_1}
              a_{i_2j_2}
              \dotsm
              a_{i_lj_l}
              b_{i'_1j'_1}
              b_{i'_2j'_2}
              \dotsm
              b_{i'_{k-l}j'_{k-l}}
    ;
\intertext{in this sum the entries of $A$ and $B$ are decoupled and again by
    \eqref{eq:fullcharcoeff2} it is further equal to}
         &=\bigoplus_{l=0}^k \eta_l(A)\, \eta_{k-l}(B)\\
         &=\bigoplus_{l=0}^k a_{n-l} b_{n-(k-l)}\\    
         &= \bigoplus_{i+j=2n-k} a_ib_j,
  \end{align*}
  which is indeed $d_{n-k}$, the $(n-k)$th coefficient of $(p \maxconv{n} q)(x) = (pq)^{(n)}(x)$.
  This completes the proof of (i) and (ii).
  
  So far, we did not make use of the full canonical form.
  It is, however, essential for item (iii)
  and the discussion of the roots of $(p \maxconv{n} q)(x)$
  First, we observe that both $p(x)$ and $q(x)$ are FCF maxpolynomials by Theorem~\ref{thm:fullsemi}.
  Now, we infer from Proposition~\ref{eq:maxroots} that the convolution 
  $(p \maxconv{n} q)(x)$ has the same property and by Proposition~\ref{prop:maxroots} its roots are the maximal $n$ roots among the roots of $p(x)$ and the roots of $q(x)$.
\end{proof}

\subsubsection{Principally dominant matrices}
\begin{definition}
  A square matrix $A \in \Rnnmax$ is \textbf{max-plus principally dominant} if for every $k \in\{ 1, \ldots, n\}$ the maximal minor permanent of order $k$ is achieved on a principal submatrix of $A$ of order $k$, see Definition~\ref{def:permanent}.
\end{definition}

For example, diagonal matrices are principally dominant, as well as diagonally dominant
matrices (the diagonal elements are the maximal elements of their rows).
Also Gram matrices are  principally dominant.
In fact, it is easy to generalize Proposition~\ref{prop:gperm} to minors and to show that for a Gram matrix $G=A^TA$ the maximal minor is
\begin{equation}
  \label{eq:grammaxminor}
  \eta_k(A^TA) = \gamma^\downarrow_1\gamma^\downarrow_2\dotsm \gamma^\downarrow_k ,
\end{equation}
where the vector $(\gamma^\downarrow_1,\gamma^\downarrow_2,\dots,\gamma^\downarrow_n)$ is
the \emph{nonincreasing rearrangement} of the vector $(g_{11},g_{22},\dots,g_{nn})= (\norm{a_1}^2,\norm{a_2}^2,\dots,\norm{a_n}^2)$ of squared norms of the columns of the matrix $A$.
\begin{remark}
	A matrix may be symmetric and not principally dominant, e.g.,
	$
	\begin{bmatrix}
	0 & 1 \\
	1 & 0 
	\end{bmatrix},
	$
	or principally dominant and not symmetric, e.g.,
	$
	\begin{bmatrix}
	2 & 1 \\
	0 & 0 
	\end{bmatrix}.
	$
\end{remark}
\begin{remark}
	When $A$ and $B$ are principally dominant then $C = A \oplus B$ is not necessarily principally dominant. For example, in
	$$
	\begin{bmatrix}
	6 & 5 & 0 & 0 \\
	5 & 0 & 3 & 0 \\
	0 & 2 & 0 & 0 \\
	0 & 0 & 0 & 0
	\end{bmatrix}
	\oplus
	\begin{bmatrix}
	6 & 5 & 0 & 0 \\
	5 & 0 & 0 & 2 \\
	0 & 0 & 0 & 0 \\
	0 & 3 & 0 & 0
	\end{bmatrix}
	=
	\begin{bmatrix}
	6 & 5 & 0 & 0 \\
	5 & 0 & 3 & 2 \\
	0 & 2 & 0 & 0 \\
	0 & 3 & 0 & 0
	\end{bmatrix}
	$$
	both $A$ and $B$ are principally dominant, but the maximal minor permanent of order $3$ in $C=A \oplus B$, which is $6 \odot 3 \odot 3 = 12$, is not achieved on any principal submatrix. 
\end{remark}
The next proposition follows immediately from \eqref{eq:charcoeff} and \eqref{eq:fullcharcoeff2}.
\begin{proposition}
  A matrix $A$ is principally dominant if and only if $\chi_A (x) = \fullchi{A} (x)$.
\end{proposition}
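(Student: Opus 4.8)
The plan is to reduce the claimed identity of maxpolynomials to a coefficient-by-coefficient comparison and then read off both sides from the formulas already established. Since two maxpolynomials in $\Rxmax$ are equal precisely when they share the same canonical form, the identity $\chi_A(x) = \fullchi{A}(x)$ holds if and only if their coefficients agree. Writing $\chi_A(x)=\bigoplus_{k=0}^n c_k x^k$ and $\fullchi{A}(x)=\bigoplus_{k=0}^n d_k x^k$, I would invoke \eqref{eq:charcoeff} and \eqref{eq:fullcharcoeff2}, which give $c_{n-k}=\delta_k(A)$ and $d_{n-k}=\eta_k(A)$. The leading coefficients ($k=0$) agree automatically, since $\delta_0(A)=\eta_0(A)=0=\maxone$ is the empty product, so the content of the equality lives entirely in the range $k\in\{1,\dots,n\}$. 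Thus $\chi_A(x)=\fullchi{A}(x)$ if and only if $\delta_k(A)=\eta_k(A)$ for every $k\in\{1,\dots,n\}$.

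It remains to recognize this last condition as principal dominance. Here the key input is the relation $\delta_k(A)\le\eta_k(A)$ noted after \eqref{eq:fullcharcoeff2}: because the principal minors of order $k$ form a subset of all order-$k$ minors, the maximum over the former, $\delta_k(A)=\bigoplus_{\abs{I}=k}\perm(A_{I,I})$, can never exceed the maximum over the latter, $\eta_k(A)=\bigoplus_{\abs{I}=\abs{J}=k}\perm(A_{I,J})$. Consequently the equality $\delta_k(A)=\eta_k(A)$ holds exactly when the maximal order-$k$ minor is attained at some principal submatrix. Since $A$ being principally dominant means by definition that this is the case for every $k\in\{1,\dots,n\}$, the two conditions coincide and the equivalence follows.

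There is no genuine obstacle in this argument — it is, as the surrounding text indicates, an immediate consequence of the coefficient formulas — and the only point demanding a little care is the index bookkeeping: one must track the shift $c_k=\delta_{n-k}(A)$, $d_k=\eta_{n-k}(A)$ and observe that the leading term matches on its own, so that the effective range of the coefficient comparison is precisely $k\ge 1$, in exact agreement with the range quantified in the definition of principal dominance.
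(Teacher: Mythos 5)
Your proof is correct and is exactly the argument the paper has in mind: it states that the proposition ``follows immediately from \eqref{eq:charcoeff} and \eqref{eq:fullcharcoeff2}'', i.e.\ from the coefficient identities $c_{n-k}=\delta_k(A)$, $d_{n-k}=\eta_k(A)$ together with $\delta_k(A)\le\eta_k(A)$. Your careful handling of the index shift and the leading coefficient only makes explicit what the paper leaves implicit.
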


In general, $(p \maxconv{n} q)(x) \geq \bigoplus_{P \in \cP_n}  \chi_{A \oplus PBP^T}(x)$ when $p(x)=\chi_{A}(x)$ and $q(x)=\chi_{B}(x)$.
However, when $A$ and $B$ are principally dominant then equality holds and we have the following version of Theorem~\ref{thm:maxconv}, which shows that in this case the max convolution can be computed on a set of $n!$ instead of $(n!)^2$ permutation matrices.
Recall from \eqref{eq:charcoeff} that by $\delta_k(A)$ we denote the maximal
value of all principal minors of order $k$ of a matrix $A$.
\begin{theorem}
	\label{thm:pd_maxconv}
	Given principally dominant matrices $A, B$ of order $n$ over $\Rmax$, let $p(x)=\chi_{A}(x)$, $q(x)=\chi_{B}(x)$ be the corresponding characteristic maxpolynomials and let $(p \maxconv{n} q)(x) = \sum_{k=0}^n d_kx^k$ be their max convolution.
	\begin{enumerate}[(i)]
         \item \label{it1:thm:pd-maxconv}
          The max convolution can be written as
          \begin{equation}
            \label{eq:pd_convolution}
            (p \maxconv{n} q)(x) = \bigoplus_{P \in \cP_n}  \chi_{A \oplus PBP^T}(x).
          \end{equation}
         \item \label{it2:thm:pd-maxconv}
          The coefficients of the max convolution evaluate to
          \begin{equation}
            \label{eq:pd_maxconv:ck}
            d_{n-k} = \bigoplus_{l=0}^k \delta_l(A)\,\delta_{k-l}(B).
          \end{equation}
	\end{enumerate}
\end{theorem}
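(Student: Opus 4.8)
The plan is to leverage Theorem~\ref{thm:maxconv} and reduce everything to a single coefficient identity. Since $A$ and $B$ are principally dominant, the characterization of principal dominance gives $\chi_A = \fullchi{A}$ and $\chi_B = \fullchi{B}$, so $p$ and $q$ are exactly the full characteristic maxpolynomials of $A$ and $B$ and the convolution $(p \maxconv{n} q)(x)$ is the one analysed in Theorem~\ref{thm:maxconv}. Part (ii) then follows immediately: Theorem~\ref{thm:maxconv} gives $d_{n-k} = \bigoplus_{l=0}^k \eta_l(A)\,\eta_{k-l}(B)$, and principal dominance means $\eta_l(A) = \delta_l(A)$ and $\eta_{k-l}(B) = \delta_{k-l}(B)$, yielding \eqref{eq:pd_maxconv:ck}. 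For part (i), since the $(n-k)$th coefficient of $\bigoplus_{P\in\cP_n}\chi_{A\oplus PBP^T}(x)$ is $\bigoplus_{P}\delta_k(A\oplus PBP^T)$, it suffices to establish the coefficient identity $\bigoplus_{P\in\cP_n}\delta_k(A\oplus PBP^T) = \bigoplus_{l=0}^k\delta_l(A)\,\delta_{k-l}(B)$ for every $k$ and match it against $d_{n-k}$ from part (ii).

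I would prove this identity by two inequalities. For the bound $\leq$, I fix a permutation matrix $P$ (inducing $\pi$) and a principal index set $K$ with $\abs{K}=k$, and expand $\perm\bigl((A\oplus PBP^T)_{K,K}\bigr)$. Selecting the optimal permutation $\rho$ of $K$ and recording, for each $i\in K$, whether the maximum of the entry $a_{i\rho(i)}\oplus(PBP^T)_{i\rho(i)}$ is realized by the $A$-summand or the $PBP^T$-summand splits $K$ into $L\sqcup M$. The $A$-contribution is a single term of the minor $A_{L,\rho(L)}$ and hence at most $\eta_{\abs{L}}(A)=\delta_{\abs{L}}(A)$, while the $PBP^T$-contribution is at most $\eta_{\abs{M}}(PBP^T)=\eta_{\abs{M}}(B)=\delta_{\abs{M}}(B)$, using that $\eta$ is invariant under simultaneous row/column permutations together with principal dominance. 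Since $\abs{L}+\abs{M}=k$, taking the maximum over $\rho$, over $K$, and over $P$ gives $\bigoplus_P\delta_k(A\oplus PBP^T)\leq\bigoplus_{l=0}^k\delta_l(A)\delta_{k-l}(B)$; this is the direction already reflected in the general inequality recorded before the theorem.

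For the reverse bound $\geq$ --- which I expect to be the crux --- I would exhibit, for the index $l_0$ attaining the maximum in $\bigoplus_l\delta_l(A)\delta_{k-l}(B)$, a single permutation matrix $P$ and a single principal submatrix realizing $\delta_{l_0}(A)\,\delta_{k-l_0}(B)$. By principal dominance there are principal submatrices $A_{I,I}$ and $B_{J,J}$ with $\abs{I}=l_0$, $\abs{J}=k-l_0$, achieving $\delta_{l_0}(A)$ and $\delta_{k-l_0}(B)$. Choosing a set $K_B\subseteq[n]\setminus I$ of size $k-l_0$ (possible since $k\leq n$) and a permutation $\pi$ with $\pi(K_B)=J$, the $K_B$-block of $PBP^T$ is a relabelled copy of $B_{J,J}$, so on the principal index set $K=I\sqcup K_B$ a block-diagonal permutation of $K$ picks up $a$-entries on $I$ and $(PBP^T)$-entries on $K_B$, giving $\perm\bigl((A\oplus PBP^T)_{K,K}\bigr)\geq\delta_{l_0}(A)\odot\delta_{k-l_0}(B)$. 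Taking the maximum over $P$ then yields the reverse inequality, and the two bounds together give the coefficient identity.

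The main obstacle is precisely this last construction, and principal dominance is exactly the hypothesis that makes it go through: it forces both optimal minors to be \emph{principal}, so their row- and column-index sets coincide and the two optimal blocks can be placed on disjoint diagonal positions $I$ and $K_B$ and combined into one principal submatrix of $A\oplus PBP^T$. Without principal dominance the optimal minors of $A$ and $B$ could be genuinely non-principal (as the earlier $4\times4$ example illustrates for $A\oplus B$), and no single conjugation $PBP^T$ need realize their product on a principal submatrix, which is why only the two-sided statement of Theorem~\ref{thm:maxconv} survives in general.
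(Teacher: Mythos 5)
Your proposal is correct and follows essentially the same route as the paper: part (ii) is obtained by combining Theorem~\ref{thm:maxconv} with $\eta_l=\delta_l$, and part (i) reduces to the coefficient identity $\bigoplus_{P}\delta_k(A\oplus PBP^T)=\bigoplus_{l=0}^k\delta_l(A)\,\delta_{k-l}(B)$, where principal dominance lets the two optimal minors be taken principal and hence placed on disjoint diagonal blocks of a single $A\oplus PBP^T$. The only difference is presentational: you split the identity into an explicit upper bound (via the $L\sqcup M$ decomposition) and a lower bound (via the constructed $P$ and $K=I\sqcup K_B$), whereas the paper writes the same content as one chain of equalities over decoupled index sums.
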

\begin{proof}
  Since $A, B$ are principally dominant we have $p(x)=\chi_A(x) = \fullchi{A}(x)$ and $q(x)=\chi_B(x)=\fullchi{B}(x)$.
  From \eqref{eq:maxconv:ck} we infer that the coefficients on the left-hand side of \eqref{eq:pd_convolution} are
  \begin{align*}
    d_{n-k}
    &=\bigoplus_{l=0}^k \eta_l(A)\,\eta_{k-l}(B)
      \intertext{and since both $A$ and $B$ are principally
      dominant, we can replace the minors by principal minors and obtain}
    &=\bigoplus_{l=0}^k \delta_l(A)\,\delta_{k-l}(B)
      .
  \end{align*}
  We  have thus proved \ref{it2:thm:pd-maxconv}.
  To prove \ref{it1:thm:pd-maxconv} it remains to show that the coefficients $c_k$ 
  of the maxpolynomial $\bigoplus_{P \in \cP_n}  \chi_{A \oplus PBP^T}(x) =
  \sum_{k=0}^n c_k x^k$ on the right-hand side of \eqref{eq:pd_convolution}
  coincide
  with those of the maxpolynomial $\bigoplus_{P,Q \in \cP_n}  \fullchi{A \oplus
    PBQ}(x) = \sum_{k=0}^n d_k x^k$ which we just computed~\eqref{eq:pd_maxconv:ck}.

  The idea is as follows.
  The coefficient $c_{n-k}$ equals the (standard) sum of a maximal permanent of some submatrix $A'$ of $A$ of order $l$ and a maximal permanent of a submatrix $B'$ of $B$ of order $k-l$, where there is no common row index or common column index between $A'$ and the image of $B'$ in $PBQ$.
  But in the principal dominant case, since the maximizing submatrices
  can be chosen to be principal, the permutation matrix $Q$ may be chosen
  to be equal to $P^T$ and thus $A'$ and the image of $B'$ in $PBP^T$ are decoupled.

  The details of the calculation are as follows.
  Applying \eqref{eq:charcoeff} we can write
  \begin{align*}
    c_{n-k}
    &= \bigoplus_{P\in \cP_n} \delta_k(A\oplus PBP^T)\\
    &=
      \begin{multlined}[t]
        \bigoplus_{\pi\in\SG_n}
        \bigoplus_{i_1,i_2,\dots,i_k}\bigoplus_{\sigma\in\SG_k}
        (a_{i_1i_{\sigma(1)}}\oplus b_{\pi(i_1)\pi(i_{\sigma(1)})})
        (a_{i_2i_{\sigma(2)}}\oplus b_{\pi(i_2)\pi(i_{\sigma(2)})})
        \\
        \dotsm
        (a_{i_ki_{\sigma(k)}}\oplus b_{\pi(i_k)\pi(i_{\sigma(k)})}),
      \end{multlined}
    \intertext{
    where we switched the notation from the permutation matrices $P$
    to the corresponding permutations $\pi$. In order to keep the
    proliferation of indices within manageable bounds, we now replace the sequences of
    distinct indices by injective functions
    $g,h:[k]\to[n]$, where $h=\pi\circ g$, and obtain
    }
    &=
      \begin{multlined}[t]
        \bigoplus_{g,h:[k]\to[n]}
        \bigoplus_{\sigma\in\SG_k}
        (a_{g(1)g(\sigma(1))}\oplus b_{h(1)h(\sigma(1)))})
        (a_{g(2)g(\sigma(2))}\oplus b_{h(2)h(\sigma(2)))})
        \\
        \dotsm
        (a_{g(k)g(\sigma(k))}\oplus b_{h(k)h(\sigma(k)))})
        ;
      \end{multlined}
    \intertext{
    after expanding the product we obtain a sum over all partitions
    of $[k]$ into two subsets 
    which we denote     by index sequences    $\underline{i}$ and
    $\underline{j}$     of size $l$ and $k-l$, respectively:
    }
    &=
      \begin{multlined}[t]
        \bigoplus_{g,h:[k]\to[n]}
        \bigoplus_{l=0}^k
        \bigoplus_{\sigma\in\SG_k}
        a_{g(i_1)g(\sigma(i_1))}
        a_{g(i_2)g(\sigma(i_2))}
        \dotsm
        a_{g(i_l)g(\sigma(i_l))}
        \phantom{xxxxx}
        \\
        b_{h(j_1)h(\sigma(j_1))}
        b_{h(j_2)h(\sigma(j_2))}
        \dotsm
        b_{h(j_{k-l})h(\sigma(j_{k-l}))}
        ;
      \end{multlined}
    \intertext{
    now the entries of $A$ and $B$ are decoupled
    and since both $A$ and $B$ are principally dominant the maximal values are
    attained when both $\underline{i}$ and $\underline{j}$ are invariant under
    $\sigma$ and thus give rise to term from a principal minor, yielding
    }
    &=\bigoplus_{l=0}^k \delta_l(A)\,\delta_{k-l}(B)
  \end{align*}
  as claimed.
\end{proof}
\subsubsection{Symmetric matrices}
The convolution formulas of \cite{MSS15} are based on symmetric matrices.
In Theorem~\ref{thm:pd_maxconv} we got an analogous formula in the max-plus setting for matrices that are principally dominant.  
The following example shows that symmetry of the matrices is not
the right ingredient in our setting.
When a max-plus matrix $A$ is symmetric then $\chi_A (x)$ and $\fullchi{A} (x)$ have the same roots (see \cite{Hook14}), that is, $\chi_A (x)$ and $\fullchi{A} (x)$ induce the same polynomial  function, however, unlike the full characteristic polynomial $\fullchi{A} (x)$, the plain characteristic polynomial $\chi_A (x)$ is
not necessarily
in FCF.
\begin{example}
\label{ex:symmetric}
Let $A, B$ be the symmetric matrices
$$
A =
\begin{bmatrix*}[r]
2 & \maxzero \\
\maxzero & 0
\end{bmatrix*},
\qquad
B =
\begin{bmatrix*}[r]
0 & 10 \\
10 & 0
\end{bmatrix*}.
$$
Then $\chi_A (x) = x^2 \oplus 2x \oplus 2$ with roots $(2,0)$, while 
$\chi_B (x) = x^2 \oplus x \oplus 20$ with roots $(10,10)$.
For each $P \in \cP_2$ we get 
$$
A \oplus PBP^T = 
\begin{bmatrix*}[r]
2 & \maxzero \\
\maxzero & 0
\end{bmatrix*}
$$
and therefore $\bigoplus_{P \in \cP_2}  \mathrm{perm} (x I \oplus A \oplus PBP^T) = \chi_A (x) = x^2 \oplus 2x \oplus 2$ with roots $(2,0)$.
The max convolution of $\chi_A (x)$ and $\chi_B (x)$ is $x^2 \oplus 2x^3 \oplus 20$ with roots $(10,10)$.
Thus, in this case, where $B$ is not principally dominant, Equation~\eqref{eq:pd_convolution} of  Theorem~\ref{thm:pd_maxconv} does not hold.
We remark that equation~\eqref{eq:pd_convolution} does not hold here even functionally, nor would it help to use the functional convolution instead of the formal one. 

Let us now replace $B$ by the matrix 
$$
B' = \begin{bmatrix*}[r]
10 & 0 \\
0 & 10
\end{bmatrix*}.
$$
Then $\chi_{B'} (x) = x^2 \oplus 10x \oplus 20$ has the same roots as $\chi_B$, namely $(10,10)$.
However, for each $P \in \cP_2$, 
$$
A \oplus PB'P^T = 
\begin{bmatrix*}[r]
10 & \maxzero \\
\maxzero & 10
\end{bmatrix*}
$$
and therefore $\bigoplus_{P \in \cP_2}  \mathrm{perm} (x I \oplus A \oplus PB'P^T) = 
x^2 \oplus 10x \oplus 20 = (x \oplus 10)^2$,
which equals the max convolution of $\chi_A (x)$ and $\chi_{B'} (x)$.
In this case $A$ and $B'$ are principally dominant and Theorem~\ref{thm:pd_maxconv} applies.

Observe also that $\fullchi{A} (x)= \chi_A (x) = x^2 \oplus 2x \oplus 2 = (x \oplus 2) (x \oplus 0)$ and $\fullchi{B} (x)=x^2 \oplus 10x \oplus 20 = (x \oplus 10)^2$.
Then $\bigoplus_{P, Q \in \cP_2}  \mathrm{perm} (x 0 \oplus A \oplus PBQ) = x^2 \oplus 10x \oplus 20 = (\fullchi{A} \maxconv{2} \fullchi{B})(x)$ and so Theorem~\ref{thm:maxconv} applies.
\end{example}

\subsubsection{Max-row convolution}
In \cite{MSS15} the ``asymmetric additive convolution'' of the characteristic polynomials $p(x)$ and $q(x)$ of $AA^T$ and $BB^T$, respectively,
is defined as
$$
p \boxplus \! \! \boxplus_n \, q(x) = \mathbb{E}_{P,Q} \, \, \chi_{(A+PBQ)(A+PBQ)^T} (x),
$$
where the expectation is computed by randomly sampling the matrices $P,Q$ over the set of orthonormal matrices equipped with the Haar measure.
But if we look at the Gram characteristic polynomial of $(A \oplus PB)^T$, i.e., the characteristic polynomial of $((A \oplus PB)(A \oplus PB)^T))^{\circ \half}$ (note the Hadamard power of $\half$), then the max-plus analogue, the \textbf{max-row convolution}, can be expressed through the already defined max convolution, as shown below.
In the following theorem we denote by $M_i$ the $i$-th row of a matrix $M$.
\begin{theorem}
	Let $A, B$ be matrices of order $n$ over $\Rmax$, and
	let $m_i$, (resp.~$l_i$), $i = 1, \ldots, n$, be the maximal element of row $i$ in $A$ (resp.~$B$).
	Then the max convolution of the characteristic maxpolynomials 
        $p(x) = \chi_{\widehat{A^T}} (x)$ and $q(x) = \chi_{\widehat{B^T}} (x)$
        is
	\be
	(p \maxconv{n} q)(x) = \bigoplus_{P \in \cP_n} \, \, \chi_{\widehat{C(P)}} (x),
	\label{eq:maxrowconv}
	\ee
	where $C(P) = (A \oplus PB)^T$
	and the roots of $(p \maxconv{n} q)(x)$ are the maximal $n$ numbers among $(m_1, \ldots, m_n, l_1, \ldots, l_n)$.
	\end{theorem}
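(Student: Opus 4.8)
The plan is to reduce everything to the explicit product formula (\ref{eq:maxrow}) for Gram characteristic maxpolynomials and then invoke Proposition~\ref{prop:maxroots}; the key observation is that $\chi_{\widehat{M}}(x)$ depends only on the column maxima of $M$. First I would make the two ingredient polynomials fully explicit. Since the columns of $A^T$ are the rows of $A$, formula (\ref{eq:maxrow}) applied to $A^T$ gives
$$
p(x) = \chi_{\widehat{A^T}}(x) = \bigodot_{i=1}^n (x \oplus m_i),
$$
where $m_i$ is the maximal entry of row $i$ of $A$; likewise $q(x) = \chi_{\widehat{B^T}}(x) = \bigodot_{i=1}^n (x \oplus l_i)$. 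In particular both $p(x)$ and $q(x)$ are FCF maxpolynomials of degree $n$ whose roots are precisely $m_1, \ldots, m_n$ and $l_1, \ldots, l_n$.

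Next I would apply Proposition~\ref{prop:maxroots}(iii): taking $r_i = m_i$ and $s_i = l_i$ in (\ref{eq:maxroots}) yields
$$
(p \maxconv{n} q)(x) = \bigoplus_{\sigma \in \SG_n} \bigodot_{i=1}^n (x \oplus m_i \oplus l_{\sigma(i)}).
$$
It then remains to identify the right-hand side of (\ref{eq:maxrowconv}) with this sum. For a permutation matrix $P \in \cP_n$ inducing the permutation $\pi$, the product $PB$ permutes the rows of $B$, so row $i$ of $C(P)^T = A \oplus PB$ is the max of row $i$ of $A$ and row $\pi(i)$ of $B$, whose maximal entry is $m_i \oplus l_{\pi(i)}$. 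Applying (\ref{eq:maxrow}) to $C(P)$, whose columns are exactly these rows, gives $\chi_{\widehat{C(P)}}(x) = \bigodot_{i=1}^n (x \oplus m_i \oplus l_{\pi(i)})$.

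Since $\pi$ ranges over all of $\SG_n$ as $P$ ranges over $\cP_n$, taking $\bigoplus_{P}$ reproduces exactly the sum over $\sigma$ above, which proves (\ref{eq:maxrowconv}). Finally the statement about roots is immediate from Proposition~\ref{prop:maxroots}(ii): with both degrees equal to $n$ and convolution order $n$, the convolution retains the maximal $n + n - n = n$ roots among the roots of $p(x)$ and $q(x)$, that is, the maximal $n$ numbers among $(m_1, \ldots, m_n, l_1, \ldots, l_n)$. The proof carries essentially no analytic content once (\ref{eq:maxrow}) is in hand; the only genuine care needed is the bookkeeping, namely correctly identifying the maximal element of column $i$ of $C(P)$ with $m_i \oplus l_{\pi(i)}$ and tracking the direction of the permutation that $P$ induces on the rows of $B$.
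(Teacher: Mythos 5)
Your proof is correct and follows essentially the same route as the paper: both reduce to formula \eqref{eq:maxrow} to identify $\chi_{\widehat{C(P)}}(x)$ with $\bigodot_{i=1}^n(x\oplus m_i\oplus l_{\pi(i)})$ and then relate the maximum over permutations to the top $n$ roots. Your closing step is in fact slightly tighter than the paper's, since you match the right-hand side term by term with the expansion of $(p\maxconv{n}q)(x)$ from Proposition~\ref{prop:maxroots}(iii), whereas the paper explicitly constructs the maximizing permutation and then appeals to the definitions.
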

	\begin{proof}
		By \eqref{eq:maxrow}, the roots of $\chi_{\widehat{M}} (x)$ are the maximal elements of the columns of $M$.
		Thus, for a fixed permutation matrix $P$,
		$$
		\chi_{\widehat{C(P)}} (x) = (x \oplus r_1)(x \oplus r_2) \cdots (x \oplus r_n),
		$$
		where, for each $i$,
		$$
		r_i = \max_{1 \leq j \leq n} (A \oplus PB)^{T}_{ji} = \max_{1 \leq j \leq n} (A \oplus PB)_{ij} = \norm{ (A \oplus PB)_i} = \norm{A_i} \oplus \norm{(PB)_i}.
		$$
		Let $P$ represent the permutation $\pi \in \SG_n$. Then
		$$
		r_i = \norm{A_i} \oplus \norm{B_{\pi(i)}} = m_i \oplus l_{\pi(i)}
		$$
		and
		$$
		\chi_{\widehat{C(P)}} (x) = (x \oplus m_1 \oplus l_{\pi(1)}) (x \oplus m_2 \oplus l_{\pi(2)}) \cdots (x \oplus m_n \oplus l_{\pi(n)}).
		$$
		This is a maxpolynomial of degree $n$ with roots $r_i$, which are n elements among $(m_1, \ldots, m_n, l_1, \ldots, l_n)$.
		Clearly, the maximum over all these maxpolynomials is the one whose roots are the maximal $n$ elements among $(m_1, \ldots, m_n, l_1, \ldots, l_n)$.
		We claim that this maxpolynomial is achieved when going over all permutations $\pi$ in the right hand side of \eqref{eq:maxrowconv}.
		Indeed, suppose that the elements $m_i$ are arranged in decreasing order according to the permutation $\sigma$, that is, $m_{\sigma(1)} \geq m_{\sigma(2)} \geq \cdots \geq m_{\sigma(n)}$, and the elements $l_i$ are arranged in decreasing order according to the permutation $\tau$: $l_{\tau(1)} \geq l_{\tau(2)} \geq \cdots \geq l_{\tau(n)}$.
		Then the maximum is achieved for the permutation $\pi$ which couples $l_{\tau(i)}$ with $m_{\sigma(n+1-i)}$:
		$$
		\pi(i) = \sigma(n+1-\tau^{-1}(i)).
		$$
		For example, if $m_1 \geq m_2 \geq \cdots \geq m_n$ and $l_1 \geq l_2 \geq \cdots \geq l_n$ ($\sigma$ and $\tau$ are the identity permutation), and $m_1, \ldots, m_k$, $l_1, \ldots, l_{n-k}$ are the maximal $n$ elements among the $m_i$ and $l_i$ then for $\pi(i) = n+1-i$, $i=1,\ldots,n$, the roots of $\chi_{\widehat{C(P)}} (x)$, where $P$ is the permutation matrix representing $\pi$, are:
		\begin{align*}
		&m_1 \oplus l_n, m_2 \oplus l_{n-1}, \ldots, m_k \oplus l_{n+1-k},
		m_{k+1} \oplus l_{n-k}, \ldots, m_n \oplus l_1 \\
		= \, &m_1, m_2, \ldots, m_k, l_{n-k}, \ldots, l_1.
		\end{align*}
		These roots are also the roots of $(p \maxconv{n} q)(x)$, the left hand side of \eqref{eq:maxrowconv}, by the definitions of the Gram characteristic maxpolynomial and the max convolution.
\end{proof}

A max-column convolution can be defined in an analogous way.
\section{Hadamard product}
\label{sec:hadamard}
\subsection{Hadamard product of maxpolynomials}
Given two maxpolynomials $p(x) = \bigoplus_{i=0}^{n} a_i x^i$, $q(x) = \bigoplus_{i=0}^{n} b_i x^i$ of degree $n$, their max-plus \textbf{Hadamard product} is defined as 
$$
(p \circ q)(x) = \bigoplus_{i=0}^{n} a_i b_i x^i,
$$
that is, the coefficients of Hadamard product $(p \circ q)(x)$ are the max-products (standard sums) of the corresponding coefficients of $p(x)$ and $q(x)$.
In general, the roots of $(p \circ q)(x)$ are not the max-products of the corresponding roots of $p(x)$ and $q(x)$.
For example, let $p(x) = x^2 \oplus 4x \oplus 4$ and let $q(x) = x^2 \oplus 1x \oplus 3$.
The roots of $p(x)$ are $(4,0)$ and the roots of $q(x)$ are $(1.5,1.5)$, whereas
the roots of $(p \circ q)(x) = x^2 \oplus 5x \oplus 7$ are $(5,2)$. 
However, for FCF maxpolynomials we have the following result.
\begin{proposition}
	Let $p(x), q(x)$ be FCF maxpolynomials with roots $r_1 \geq \cdots \geq r_n$ and $s_1 \geq \cdots \geq s_n$, respectively.
	Then $(p \circ q)(x)$ is an FCF maxpolynomial with roots $t_i = r_i\odot s_i = r_i + s_i$, $i = 1, \ldots, n$. That is,
	\be
	(p \circ q)(x) = \bigoplus_{\sigma \in \SG_n} \bigodot_{i=1}^n (x \oplus r_i \odot s_{\sigma(i)}) = \bigodot_{i=1}^n (x \oplus r_i \odot s_i).
	\label{eq:hadamardroots}
	\ee
	\label{prop:hadamardroots}
\end{proposition}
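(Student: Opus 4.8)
The plan is to pass through the coefficient–root dictionary for FCF maxpolynomials supplied by Proposition~\ref{prop:fullcanonical} and then reduce the statement to a short rearrangement argument. Writing $p(x) = \bigoplus_{i=0}^n a_i x^i$ and $q(x) = \bigoplus_{i=0}^n b_i x^i$, the expansion carried out in the proof of Proposition~\ref{prop:fullcanonical} shows that the $k$-th coefficient of an FCF maxpolynomial is the max-product of its leading coefficient with its $n-k$ largest roots. Since here the roots are listed in \emph{decreasing} order, the $n-k$ largest roots of $p(x)$ are $r_1,\dots,r_{n-k}$, so $a_k = a_n \odot r_1 \odot \dots \odot r_{n-k}$ (that is, $a_k = a_n + \sum_{i=1}^{n-k} r_i$ in standard arithmetic), and likewise $b_k = b_n \odot s_1 \odot \dots \odot s_{n-k}$.

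First I would compute the coefficients of the Hadamard product directly. The $k$-th coefficient of $(p\circ q)(x)$ is $a_k \odot b_k$, and the dictionary gives
\begin{equation*}
a_k \odot b_k = (a_n \odot b_n)\odot \bigodot_{i=1}^{n-k} (r_i \odot s_i) = (a_n \odot b_n)\odot \bigodot_{i=1}^{n-k} t_i ,
\end{equation*}
where $t_i = r_i \odot s_i = r_i + s_i$. The crucial observation is that the termwise sum of two non-increasing sequences is again non-increasing, so $t_1 \geq t_2 \geq \dots \geq t_n$; hence $t_1,\dots,t_{n-k}$ are exactly the $n-k$ largest of the $t_i$. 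Applying Proposition~\ref{prop:fullcanonical} to $\bigodot_{i=1}^n (x \oplus t_i)$, the displayed quantity is precisely the $k$-th coefficient of $(a_n\odot b_n)\odot \bigodot_{i=1}^n (x \oplus t_i)$. Matching coefficients term by term shows that $(p\circ q)(x)$ and $(a_n\odot b_n)\odot\bigodot_{i=1}^n(x\oplus t_i)$ have the same canonical form; in particular $(p\circ q)(x)$ is FCF with roots $t_i = r_i + s_i$, which is the right-hand equality of \eqref{eq:hadamardroots} (in the monic normalization $a_n = b_n = \maxone$; otherwise the leading factor $a_n\odot b_n$ is simply carried along).

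It then remains to identify $\bigodot_{i=1}^n(x \oplus t_i)$ with the $\oplus$ over all $\sigma \in \SG_n$, mirroring Proposition~\ref{prop:maxroots}(iii). For a fixed $\sigma$ the factor $\bigodot_{i=1}^n(x \oplus r_i\odot s_{\sigma(i)})$ is FCF, and its $k$-th coefficient is the max-product of the $n-k$ largest of the values $r_i + s_{\sigma(i)}$. The only genuine step is to check that $\sigma = \mathrm{id}$ maximizes this coefficient for every $k$ simultaneously: selecting any $m := n-k$ of the pairs contributes $\sum_{i \in S} r_i + \sum_{i \in S} s_{\sigma(i)}$ for some $m$-subset $S$, and since $\sigma(S)$ is again an $m$-subset, each of the two sums is bounded by the sum of the $m$ largest $r_i$ and of the $m$ largest $s_i$, respectively. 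This common upper bound $\sum_{i=1}^m (r_i + s_i)$ is attained at $\sigma = \mathrm{id}$ because the $t_i$ are already sorted. Hence the $\sigma = \mathrm{id}$ term dominates every other term coefficientwise, the $\oplus$ collapses to it, and the left-hand equality of \eqref{eq:hadamardroots} follows.

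The expected main obstacle is purely bookkeeping: keeping the decreasing-order convention consistent with the increasing-order formula of Proposition~\ref{prop:fullcanonical}, and tracking the leading coefficient $a_n\odot b_n$. The genuine mathematical content reduces to the elementary subset-sum bound of the previous paragraph, an instance of the rearrangement inequality that presents no real difficulty.
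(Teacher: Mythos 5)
Your proof is correct and follows essentially the same route as the paper: identify the coefficients of $(p\circ q)(x)$ via the FCF coefficient--root dictionary of Proposition~\ref{prop:fullcanonical} to get the roots $t_i=r_i+s_i$, then show the $\sigma=\mathrm{id}$ term dominates every coefficient by the subset/rearrangement bound. The only cosmetic difference is that the paper verifies the FCF property by checking concavity of the differences $c_{i-1}-c_i$ directly, whereas you use the equivalent partial-product form of the coefficients; your write-up of the permutation step is a more detailed version of the one-line argument given in the paper.
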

\begin{proof}
	Let $p(x) = \bigoplus_{i=0}^{n} a_i x^i$, $q(x) = \bigoplus_{i=0}^{n} b_i x^i$ and let $(p \circ q)(x) = \bigoplus_{i=0}^{n} c_i x^i$, where $c_i = a_i b_i$, be their Hadamard product.
	Then
	\begin{align*}
	c_{i-1} - c_i &= (a_{i-1} + b_{i-1}) - (a_i + b_i) = (a_{i-1} - a_i) + (b_{i-1} - b_i) \\
	&\leq (a_i - a_{i+1}) + (b_i - b_{i+1}) = (a_i + b_i) - (a_{i+1} +  b_{i+1}) \\
	&= c_i - c_{i+1}
	\end{align*}
	for $i = 1, \ldots, n-1$.
	It follows that $(p \circ q)(x)$ is FCF.
	Moreover,
	$$
	t_i = c_{i-1} - c_i = (a_{i-1} - a_i) + (b_{i-1} - b_i) = r_i + s_i
	$$
	for $i = 1, \ldots, n$.
	
	As for \eqref{eq:hadamardroots}, it follows from the fact that the coefficient of $x^k$ in each $\bigodot_{i=1}^n (x \oplus r_i \odot s_{\sigma(i)})$ is a max-product of $n-k$ roots $r_i$ and $n-k$ roots $s_j$, and this term is maximal when $\sigma$ is the identity permutation. 
\end{proof}
The following properties of the Hadamard product of maxpolynomials
are easily verified. 
\begin{proposition}
	Let $p(x), p_1(x), p_2(x), q(x), q_1(x), q_2 (x)$ be maxpolynomials. Then 
	\begin{enumerate}[(i)]
		\item Commutativity: $(p \circ q)(x) = (q \circ p)(x)$.
		\item Associativity:  $ ((p_1 \circ p_2) \circ p_3 )(x) = (p_1 \circ ( p_2 \circ p_3) )(x)$. 
		\item Distributivity: $((p_1 \oplus p_2) \circ q)(x) = (p_1 \circ q)(x) \oplus (p_2 \circ q)(x)$.
		\item $((p_1 \circ q_1)(p_2 \circ q_2)) (x) \le ((p_1p_2  ) \circ (q_1 q_2))(x)$.
		\item $((p_1 \circ q_1)  \maxconv{k} (p_2 \circ q_2)) (x) \le ((p_1  \maxconv{k} p_2  ) \circ (q_1  \maxconv{k} q_2))(x)$.
	\end{enumerate}
	\label{prop:Hadamardprop}
\end{proposition}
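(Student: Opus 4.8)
The plan is to verify each of the five listed properties by reducing them to coefficient-wise statements, since every operation involved (Hadamard product, max-product of polynomials, and $k$-th max convolution) is defined in terms of the coefficient sequences. Writing $p_1(x) = \bigoplus_i a_i x^i$, $p_2(x) = \bigoplus_i a'_i x^i$, $q_1(x) = \bigoplus_i b_i x^i$, $q_2(x) = \bigoplus_i b'_i x^i$, I would compute the coefficient of $x^l$ on each side of the claimed identity or inequality and compare. Since $\Rmax$ is commutative and associative with $\odot$ distributing over $\oplus$, properties (i)--(iii) are immediate: commutativity and associativity of the Hadamard product follow from the corresponding properties of $\odot$ applied coefficientwise, and distributivity follows because the coefficient of $x^i$ on both sides is $(a_i \oplus a'_i)b_i = a_i b_i \oplus a'_i b_i$.

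For item (iv), the key observation is that the coefficient of $x^l$ on the left-hand side $((p_1 \circ q_1)(p_2 \circ q_2))(x)$ is $\bigoplus_{i+j=l} (a_i b_i)(a'_j b'_j)$, whereas the coefficient of $x^l$ on the right-hand side $((p_1 p_2) \circ (q_1 q_2))(x)$ is $\bigl(\bigoplus_{i+j=l} a_i a'_j\bigr)\bigl(\bigoplus_{i'+j'=l} b_{i'} b'_{j'}\bigr)$. The left side is a maximum over diagonal terms (indices coupled as $i,j$ in both factors), while the right side is a maximum over all pairs of independently chosen splittings. Since the diagonal terms are a subset of the decoupled terms, the inequality $\leq$ follows directly; I would spell out that $(a_i b_i)(a'_j b'_j) = (a_i a'_j)(b_i b'_j)$ appears in the right-hand maximum with the choice $i'=i$, $j'=j$.

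Item (v) is the analogous statement with the plain product replaced by the $k$-th max convolution, and I expect this to be the main obstacle only in bookkeeping, not in substance. Using the coefficient formula $(p \maxconv{k} q)(x) = \bigoplus_l \bigl(\bigoplus_{i+j=l+k} a_i b_j\bigr) x^l$ derived just before Theorem~\ref{thm:maxconv}, the coefficient of $x^l$ on the left side of (v) is $\bigoplus_{i+j=l+k}(a_i b_i)(a'_j b'_j)$, while the coefficient of $x^l$ on the right side is $\bigl(\bigoplus_{i+j=l+k} a_i a'_j\bigr)\bigl(\bigoplus_{i'+j'=l+k} b_{i'} b'_{j'}\bigr)$. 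Exactly as in (iv), the left side ranges over coupled splittings $i+j=l+k$ used simultaneously in the $a$- and $b$-factors, while the right side allows independent splittings, so the coupled maximum is dominated by the decoupled one and the inequality follows. I would therefore present (iv) as the prototype computation and obtain (v) by the same argument with $l$ replaced by $l+k$, noting that since these are purely formal coefficient manipulations no FCF hypothesis is needed.
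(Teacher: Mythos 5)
Your coefficientwise verification is correct and is exactly the "easily verified" argument the paper has in mind (the paper gives no proof of Proposition~\ref{prop:Hadamardprop}): items (i)--(iii) reduce to commutativity, associativity and distributivity of $\odot$ over $\oplus$ in $\Rmax$, and in (iv)--(v) the coupled maximum $\bigoplus (a_ib_i)(a'_jb'_j)$ is termwise dominated by the decoupled maximum via the choice $i'=i$, $j'=j$. No gaps; the only cosmetic point is the implicit padding with $\maxzero$ coefficients when the degrees of $p_1p_2$ and $q_1q_2$ differ, which your termwise argument handles automatically since $\maxzero$ is absorbing.
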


\subsection{Hadamard product of matrices}
\label{subsec:HPmatrices}
The \textbf{max-plus Hadamard product of matrices} is the analogue of the standard Hadamard product in max-plus algebra.
That is, if $A,B$ are two $m \times n$ max-plus matrices then their Hadamard product is $C = A \circ B$, where $C$ is an $m \times n$ matrix satisfying
$$
c_{ij} = a_{ij} \odot b_{ij},
$$
i.e. $c_{ij} = a_{ij} + b_{ij}$ in standard arithmetic.
The Hadamard power $A^{\circ t}$, $t > 0$, of $A = (a_{ij})$ is then naturally defined: $(A^{\circ t})_{ij} = ta_{ij}$, where the product $ta_{ij}$ is the standard one. 

Below we list some properties of the Hadamard product and Hadamard powers. Let $\nu (A)$ denote the largest eigenvalue of $A$, i.e., the largest root of $\chi_A (x)$,  and let $\|A\| = \max \{a_{ij}: i=1,\ldots , m,\; j=1, \ldots , n  \}$.
As before, we denote by $\widehat{A}$ the Hadamard root of the Gram matrix of $A$, i.e., $\widehat{A} = \hhalf{G} =\hhalf{(A^TA)}$ and $\widehat{A^T}=\hhalf{(AA^T)}$.
The following properties are known or easy to prove (see, e.g., \cite{P08,P12}). 
\begin{proposition} Let $A,B, A_1, \ldots, A_m \in \Rnnmax$ and $t>0$. Then we have
	$$\nu (A\circ B) \le \nu (A)\, \nu (B), \; \|A\circ B\| \le \|A\| \|B\|,$$
	$$\nu (A^{\circ t}) = \nu (A)^t,\;  \|A^{\circ t}\| = \|A\|^t,  $$
	$$A_1 ^{\circ t} \cdots  A_m ^{\circ t} =(A_1 \cdots A_m)^{\circ t}, $$
	$$\nu (AB)=\nu (BA), $$
	$$\|\widehat{A}\| = \|\widehat{A^T}\|= \|A\| = \nu (\widehat{A}) = \nu (\widehat{A^T}),$$
	$$\nu (A_1 \circ \cdots \circ A_m) \le \nu (A_1 \cdots A_m).$$
\end{proposition}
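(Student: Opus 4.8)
The plan is to treat the six groups of identities largely independently, the single nontrivial ingredient being the \emph{max cycle mean} description of the largest eigenvalue: for $A\in\Rnnmax$,
$$
\nu(A)=\max_{\gamma}\frac{1}{|\gamma|}\sum_{e\in\gamma}A_e,
$$
the maximum ranging over the cycles $\gamma$ of the weighted digraph of $A$ (edge weights in standard arithmetic), see \cite{Butkovic10}. Everything else reduces to the elementary facts that, since $t>0$, scaling commutes with the formation of maxima, and that the entrywise maximum $\norm{\cdot}$ is subadditive under $\odot$.

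First I would dispose of the entrywise and homogeneity statements. The bound $\norm{A\circ B}\le\norm{A}\,\norm{B}$ is immediate from $a_{ij}+b_{ij}\le\norm{A}+\norm{B}$, and $\norm{A^{\circ t}}=\norm{A}^t$ from $\max_{ij}t\,a_{ij}=t\max_{ij}a_{ij}$. For the Hadamard-power identity $A_1^{\circ t}\cdots A_m^{\circ t}=(A_1\cdots A_m)^{\circ t}$ I would expand the $(i,j)$ entry of the max-plus product as a maximum over paths of a sum of entries; multiplying every entry by $t>0$ pulls the scalar $t$ through both the inner sum and the outer maximum. The two eigenvalue statements $\nu(A\circ B)\le\nu(A)\,\nu(B)$ and $\nu(A^{\circ t})=\nu(A)^t$ then follow from the cycle-mean formula: along any cycle $\gamma$ the weight in $A\circ B$ splits as $\sum_{e}A_e+\sum_{e}B_e$, so its mean is at most $\nu(A)+\nu(B)$, while scaling by $t$ multiplies every cycle mean by $t$.

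Next I would prove $\nu(AB)=\nu(BA)$. Writing a cycle of $AB$ as a closed alternating walk $i_1\to j_1\to i_2\to j_2\to\cdots\to i_k\to j_k\to i_1$ that uses an $A$-edge $i_p\to j_p$ and a $B$-edge $j_p\to i_{p+1}$ at each step, the very same layered edges read off a cycle $j_1\to j_2\to\cdots\to j_k\to j_1$ of $BA$ of the same length and, after a cyclic reindexing, the same total weight. This is a weight- and length-preserving correspondence between the cycles of $AB$ and those of $BA$, so the maximal cycle means coincide (equivalently one may invoke the known equality of the nonzero roots of $\chi_{AB}$ and $\chi_{BA}$). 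For the chain $\norm{\widehat A}=\norm{\widehat{A^T}}=\norm{A}=\nu(\widehat A)=\nu(\widehat{A^T})$ I would use that, by \eqref{eq:maxrow}, $\chi_{\widehat A}(x)=\bigodot_i(x\oplus m_i)$ is already in FCF with roots the column maxima $m_i=\norm{a_i}$, whence $\nu(\widehat A)=\max_i m_i=\norm{A}$, and symmetrically $\nu(\widehat{A^T})=\norm{A}$ via the row maxima; the equalities $\norm{\widehat A}=\norm{A}$ hold because the diagonal entry $\widehat A_{ii}=\norm{a_i}$ attains $\norm{A}$, while each off-diagonal entry $\tfrac12\langle a_i,a_j\rangle$ is bounded by $\tfrac12(\norm{a_i}+\norm{a_j})\le\norm{A}$ by the Cauchy--Bunyakovsky--Schwarz inequality \eqref{eq:cauchy}.

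The main obstacle is the final inequality $\nu(A_1\circ\cdots\circ A_m)\le\nu(A_1\cdots A_m)$, whose two sides live on graphs of genuinely different shape. The plan is an averaging argument over cyclic layer shifts. Fix a cycle $\gamma=e_1\cdots e_k$ attaining the maximum on the left and traverse it $m$ times to get a layered closed walk of length $mk$; grouping the layered edges into blocks of $m$ and assigning the factors $A_1,\dots,A_m$ within each block with a common cyclic offset $s\in\{0,\dots,m-1\}$ produces a closed walk $C_s$ of $k$ product-edges in the cyclically shifted product $A_{1+s}\cdots A_{s}$. Summing the weights, for each layered edge the offsets pair it with each factor exactly once, so $\sum_{s}W_s=m\sum_{j}\sum_{l}(A_l)_{e_j}=m\,k\,\mu(\gamma)$, where $\mu(\gamma)=\nu(A_1\circ\cdots\circ A_m)$. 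By the cyclic invariance of $\nu$ just established (apply $\nu(AB)=\nu(BA)$ with $A=A_{1+s}\cdots A_m$ and $B=A_1\cdots A_s$) every shifted product has the same largest eigenvalue $\nu(A_1\cdots A_m)$, so each $C_s$ has mean at most $\nu(A_1\cdots A_m)$, i.e. $W_s\le k\,\nu(A_1\cdots A_m)$. Averaging over $s$ yields $\mu(\gamma)\le\nu(A_1\cdots A_m)$, and the maximum over $\gamma$ finishes the proof. The delicate point I expect to require care is the bookkeeping confirming that the $m$ shifts distribute the factors evenly over the edges of $\gamma$ and that each $C_s$ is genuinely a cycle in a cyclic rotation of the product rather than in the product itself.
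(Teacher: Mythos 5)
Your proposal is correct, but there is nothing in the paper to compare it against: the authors explicitly decline to prove this proposition, stating only that the properties ``are known or easy to prove'' and citing \cite{P08,P12}. Your argument is a legitimate self-contained substitute. Its one imported ingredient is the identification of $\nu(A)$ (the largest root of $\chi_A$) with the maximum cycle mean of the weighted digraph of $A$; this is indeed classical (it follows from the fact that the coefficient $\delta_k(A)$ is the maximal weight of a disjoint union of cycles on $k$ vertices, so the largest corner of $\chi_A$ is $\max_k \delta_k(A)/k$), but since it is the load-bearing fact you should state it as a cited lemma rather than fold it into the notation. The elementary items, the $\nu(AB)=\nu(BA)$ correspondence between closed walks of $AB$ and of $BA$, and the computation $\nu(\widehat A)=\max_i\norm{a_i}=\norm{A}$ from \eqref{eq:maxrow} are all fine. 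For the final inequality, your averaging over cyclic layer shifts is sound and the bookkeeping does close: each edge of $\gamma$ occurs $m$ times in the $m$-fold traversal, each occurrence is matched with each factor $A_u$ exactly once as $s$ varies, giving $\sum_s W_s=mk\,\mu(\gamma)$ exactly, while $W_s\le\operatorname{weight}(C_s)\le k\,\nu(A_{1+s}\dotsm A_m A_1\dotsm A_s)=k\,\nu(A_1\dotsm A_m)$. The only point worth making explicit in a final write-up is the inequality $W_s\le\operatorname{weight}(C_s)$, which holds because each entry of the block product is a maximum over intermediate indices that includes your particular layered choice; as written you pass from $W_s$ to the mean of $C_s$ without flagging that these are a priori different quantities related by an inequality in the direction you need.
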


Similarly to \cite{DP16}, we can prove the following max-plus version of 
\cite[Corollary 3.5, Theorem 3.9, Corollary 3.10]{DP16}.
In the proof it is useful to switch to the isomorphic max-times algebra setting by using the equality $\nu (A) = \log \mu (B)$, where $B$ denotes a non-negative $n\times n$ matrix $B=[e^{a_{ij}}]$ and $\mu (B)$ denotes the largest max-times eigenvalue of $B$.
Then the result follows by replacing the standard product of matrices by the max-times product and by applying the max-times  Gelfand formula for  $\mu (B)$ (see, e.g., \cite[Equality (4)]{P08,P12,MP15}) in the proofs of \cite[Corollary 3.5, Theorem 3.9, Corollary 3.10]{DP16}.
To avoid too much repetition of ideas from \cite{DP16} we omit the details of the proof.
\begin{theorem} Let $A,B, A_1, \ldots, A_m  \in \Rnnmax$. Then we have
	$$\|A\circ B\| \le \nu (A^TB),$$
	$$
	\|A_1 \circ A_2 \circ \cdots \circ A_m \| \le  \nu (\widehat{A_1} \circ \widehat{A_2} \circ \cdots \circ \widehat{A_m}) 
	 \le  \nu (\widehat{A_1} \widehat{A_2} \cdots \widehat{A_m}),
	$$
	$$
	\|A_1 \circ A_2 \circ \cdots \circ A_m \| \le  \nu (\widehat{A_1^T} \circ \widehat{A_2^T} \circ \cdots \circ \widehat{A_m^T})  
	\le  \nu (\widehat{A_1^T} \widehat{A_2^T} \cdots \widehat{A_m^T}).
	$$
	
	If $m$ is even then
	\begin{align*}
	 \|A_1 \circ A_2 \circ \cdots \circ A_m \|^2 &\le
	\nu (A_1 ^TA_2A_3 ^TA_4 \cdots A_{m-1} ^T A_m)\, \nu (A_1A_2 ^TA_3A_4 ^T \cdots A_{m-1} A_m ^T  ) \\
	&= \nu (A_1 ^TA_2A_3 ^TA_4 \cdots A_{m-1} ^T A_m) \, \nu (A_mA_{m-1} ^T \cdots A_{4} A_3 ^T A_2A_1 ^T ).
	\end{align*}

	If $m$ is odd then
	$$
	\|A_1 \circ A_2 \circ \cdots \circ A_m \|^2 \le  \nu (A_1 A_2 ^TA_3A_4 ^T \cdots A_{m-2} A_{m-1} ^T A_m A_1 ^T A_2 A_3 ^TA_4 \cdots A_{m-2}^T A_{m-1} A_m ^T  )
	$$
	\label{th5H}
\end{theorem}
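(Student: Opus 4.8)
The plan is to bypass the transfer to max-times algebra and the Gelfand formula (which would mirror the proofs of \cite{DP16} line by line) and instead exploit the combinatorial nature of the max-plus spectral radius. The single fact I would build everything on is that for any $M \in \Rnnmax$ one has $\nu(M) \ge \max_i M_{ii}$: the largest root of $\chi_M$ is at least its coefficient $\delta_1(M) = \max_i M_{ii}$, the maximal principal minor of order one; equivalently, $\nu(M)$ is the maximal cycle mean and each loop is a cycle of length one. Every inequality in the theorem then reduces to exhibiting a single diagonal entry of the relevant product that already dominates the left-hand side. I will also use, without further comment, that $\nu(M)=\nu(M^T)$ (the permanent is transpose-invariant, so $\chi_{M^T}=\chi_M$) together with the already established facts $\nu(A^{\circ t})=\nu(A)^t$, $\nu(AB)=\nu(BA)$, and $\nu(C_1\circ\cdots\circ C_m)\le\nu(C_1\cdots C_m)$.

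For the base inequality I would note that $(A^TB)_{jj}=\max_i(a_{ij}+b_{ij})$, so $\max_j(A^TB)_{jj}=\|A\circ B\|$ and the diagonal bound gives $\|A\circ B\|\le\nu(A^TB)$. The two chains split into two halves. The right-hand inequalities are immediate from $\nu(C_1\circ\cdots\circ C_m)\le\nu(C_1\cdots C_m)$ applied with $C_i=\widehat{A_i}$ (resp.\ $\widehat{A_i^T}$). For the left-hand inequalities I would use that $(\widehat{A_i})_{jj}=\max_k (A_i)_{kj}$ is the maximal entry of the $j$th column of $A_i$, so the $j$th diagonal entry of $\widehat{A_1}\circ\cdots\circ\widehat{A_m}$ equals $\sum_i\max_k (A_i)_{kj}$; since a sum of maxima dominates the maximum of the corresponding sums, this is at least $\max_k\sum_i (A_i)_{kj}$, and taking the maximum over $j$ gives $\|A_1\circ\cdots\circ A_m\|$. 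The diagonal bound then finishes it, and the case of $\widehat{A_i^T}$ is identical with rows replacing columns.

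The heart of the argument is the pair of squared estimates. Write $D=A_1\circ\cdots\circ A_m$ and fix indices $(i^*,j^*)$ attaining $\|D\|=\sum_l (A_l)_{i^*j^*}$. For even $m$ I would evaluate the diagonal entry $P_{j^*j^*}$ of $P=A_1^TA_2A_3^TA_4\cdots A_{m-1}^TA_m$, threading its intermediate summation indices by the alternating choice ``odd index $=i^*$, even index $=j^*$''; the transposes flip the row/column roles so that each of the $m$ factors is evaluated at $(i^*,j^*)$, and since $m$ is even the walk closes consistently, giving $P_{j^*j^*}\ge\|D\|$ and hence $\nu(P)\ge\|D\|$. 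The same threading of $Q_{i^*i^*}$ for $Q=A_1A_2^T\cdots A_{m-1}A_m^T$ yields $\nu(Q)\ge\|D\|$, so $\|D\|^2\le\nu(P)\,\nu(Q)$; the alternative form of the second factor is $\nu(Q)=\nu(Q^T)$ with $Q^T=A_mA_{m-1}^T\cdots A_2A_1^T$. For odd $m$ this threading cannot close in a single pass—after an odd number of steps the parity of the endpoint conflicts with the start—so I would instead bound the diagonal entry $R_{i^*i^*}$ of the length-$2m$ product $R=(A_1A_2^T\cdots A_m)(A_1^TA_2\cdots A_m^T)$, threading the first half from $i^*$ to $j^*$ and the second half from $j^*$ back to $i^*$; then every one of the $2m$ factors equals $(A_l)_{i^*j^*}$ (each $A_l$ occurring twice), whence $R_{i^*i^*}\ge 2\|D\|=\|D\|^2$ and $\nu(R)\ge\|D\|^2$.

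The step I expect to be the main obstacle is exactly this index threading: one must verify that the alternating assignment of $i^*$ and $j^*$ to the summation indices is globally consistent around the closed walk and that every factor, transposed or not, lands on the entry $(i^*,j^*)$. The even/odd dichotomy is precisely the statement that a single alternating pass closes up if and only if $m$ is even; this is the structural reason the odd case requires the doubled product $R$ and produces a single spectral radius rather than a product of two.
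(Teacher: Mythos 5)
Your proof is correct, and it takes a genuinely different route from the paper: the paper does not write out a proof at all, but instead transfers the statement to max-times algebra via $\nu(A)=\log\mu([e^{a_{ij}}])$ and invokes the max-times Gelfand formula so as to replay the arguments of the cited reference line by line. You replace all of that machinery by the single elementary observation that $\nu(M)\geq\max_i M_{ii}$ (the coefficient of $x^{n-1}$ in $\chi_M(x)$ is $\delta_1(M)=\max_i M_{ii}$, so the largest root is at least that), combined with explicit index threading, transpose invariance $\nu(M)=\nu(M^T)$, and the already established inequality $\nu(C_1\circ\cdots\circ C_m)\leq\nu(C_1\cdots C_m)$. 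I checked the details: $(A^TB)_{jj}=\max_i(a_{ij}+b_{ij})$ does give the base inequality; $(\widehat{A_i})_{jj}=\max_k(A_i)_{kj}$ together with ``sum of maxima dominates maximum of sums'' gives the left halves of the two chains; and the alternating assignment $k_{2r-1}=i^*$, $k_{2r}=j^*$ makes every factor of $A_1^TA_2\cdots A_{m-1}^TA_m$ (and its companion) evaluate at $(i^*,j^*)$, closing consistently exactly when $m$ is even, while the doubled product handles odd $m$ with $R_{i^*i^*}\geq 2\|D\|=\|D\|^2$. Your approach buys a short, self-contained, purely combinatorial proof that also explains structurally why the even/odd dichotomy and the doubled product appear; what it gives up is the uniformity of the paper's approach, which inherits the statements from a framework valid for general positive operators rather than exploiting the max-plus-specific fact that the diagonal (loop) bound on the maximal cycle mean is already sharp enough for all of these estimates. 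Two cosmetic points: you list $\nu(A^{\circ t})=\nu(A)^t$ and $\nu(AB)=\nu(BA)$ among your tools but never actually need them, and you should state explicitly that $\|D\|^2$ means $2\|D\|$ in conventional arithmetic so that $R_{i^*i^*}\geq 2\|D\|$ reads as the claimed squared bound.
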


\subsection{Hadamard product of characteristic maxpolynomials}
\begin{theorem}
	\label{thm:Hdprod}
	Given matrices $A, B$ of order $n$ over $\Rmax$, let $p(x)=\fullchi{A}(x)$, $q(x)=\fullchi{B}(x)$ be the corresponding full characteristic maxpolynomials and
	let $(p \circ q)(x) =  \sum_{k=0}^n d_kx^k$ be the Hadamard product of the maxpolynomials.
	\begin{enumerate}[(i)]
		\item The Hadamard product can be written as
		\begin{equation}
		\label{eq:Hdproduct}
		(p \circ q)(x) = \bigoplus_{P,Q \in \cP_n}  \fullchi{A \circ PBQ}(x).
		\end{equation}
		\item\label{it2:thm:Hdprod}
		The ordered vector of the roots of $(p \circ q)(x)$ is the
                Hadamard product of the ordered vectors of the roots of $p(x)$ and $q(x)$.
	\end{enumerate}
\end{theorem}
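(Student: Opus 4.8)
The plan is to verify statement (i) at the level of coefficients and then read off (ii) from the root description already available for the Hadamard product. Since both $p(x)=\fullchi{A}(x)$ and $q(x)=\fullchi{B}(x)$ are FCF maxpolynomials by Theorem~\ref{thm:fullsemi}, Proposition~\ref{prop:hadamardroots} applies directly and shows that $(p \circ q)(x)$ is FCF with ordered root vector given componentwise by $t_i = r_i \odot s_i$, where $r_1 \geq \cdots \geq r_n$ and $s_1 \geq \cdots \geq s_n$ are the ordered roots of $p(x)$ and $q(x)$. This is exactly statement \ref{it2:thm:Hdprod}, so no further argument is needed for (ii).

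For (i) I would compare the coefficient of $x^{n-k}$ on both sides. On the left, by the definition of the Hadamard product together with the fact that the $x^{n-k}$-coefficient of $\fullchi{A}(x)$ (resp.\ $\fullchi{B}(x)$) is $\eta_k(A)$ (resp.\ $\eta_k(B)$), see \eqref{eq:fullcharcoeff2}, this coefficient equals $\eta_k(A) \odot \eta_k(B) = \eta_k(A) + \eta_k(B)$. On the right, since the max ($\oplus$) of maxpolynomials is taken coefficientwise, the $x^{n-k}$-coefficient of $\bigoplus_{P,Q \in \cP_n} \fullchi{A \circ PBQ}(x)$ is $\bigoplus_{P,Q \in \cP_n} \eta_k(A \circ PBQ)$. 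Thus (i) reduces to the single identity
\begin{equation*}
\bigoplus_{P,Q \in \cP_n} \eta_k(A \circ PBQ) = \eta_k(A) + \eta_k(B), \qquad k = 0, 1, \ldots, n.
\end{equation*}

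To prove this identity I would write $P,Q \in \cP_n$ via the permutations $\pi,\rho \in \SG_n$ they induce, so that $(A \circ PBQ)_{ij} = a_{ij} + b_{\pi(i)\rho(j)}$, and expand $\eta_k(A \circ PBQ)$ using \eqref{eq:fullcharcoeff2} as a maximum over pairs of distinct index tuples $(i_1,\dots,i_k)$, $(j_1,\dots,j_k)$ of the sums $\sum_{t=1}^k (a_{i_t j_t} + b_{\pi(i_t)\rho(j_t)})$. For the upper bound I split each such sum into an $A$-part and a $B$-part: the $A$-part $\sum_t a_{i_t j_t}$ is bounded by $\eta_k(A)$, and since $\pi$ and $\rho$ preserve distinctness the images $\pi(i_1),\dots,\pi(i_k)$ and $\rho(j_1),\dots,\rho(j_k)$ are again distinct tuples, so the $B$-part is bounded by $\eta_k(B)$. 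For the reverse inequality I fix index tuples realizing $\eta_k(A)$ and $\eta_k(B)$, and then choose $\pi,\rho$ carrying the optimal row/column supports of $A$ onto those of $B$, using that a bijection between two $k$-element subsets of $[n]$ extends to a permutation of $[n]$; with this choice both parts attain their maxima simultaneously.

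The main obstacle is precisely this last \emph{decoupling} step: one must check that the maximization over the supports of $A$ and over those of $B$ can be realized independently, which rests on the freedom to extend partial bijections to elements of $\cP_n$. This is the same mechanism that drives the decoupling in the proof of Theorem~\ref{thm:maxconv}, and once it is in place the two inequalities combine to give the coefficient identity, finishing (i); statement (ii) then also holds a posteriori, the right-hand side of \eqref{eq:Hdproduct} being equal to the FCF maxpolynomial $(p \circ q)(x)$ whose roots are described by Proposition~\ref{prop:hadamardroots}.
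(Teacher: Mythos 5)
Your proposal is correct and follows essentially the same route as the paper: both reduce (i) to the coefficient identity $\bigoplus_{P,Q}\eta_k(A\circ PBQ)=\eta_k(A)\,\eta_k(B)$ via \eqref{eq:fullcharcoeff2} and decouple the $A$- and $B$-parts using the freedom in choosing the permutations, and both obtain (ii) from Theorem~\ref{thm:fullsemi} together with Proposition~\ref{prop:hadamardroots}. Your two-inequality formulation of the decoupling (upper bound by distinctness-preservation, lower bound by extending partial bijections to permutations) is just a more explicit rendering of the paper's single chain of re-indexed equalities.
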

\begin{proof}
	The proof is similar to the one of Theorem~\ref{thm:maxconv}, in fact even simpler. 
	When computing the coefficients of $(p \circ q)(x)$ then instead of computing permanent minors of a maximum of submatrices they are computed on (standard) sums of submatrices.
	Thus, each coefficient $c_{n-k}$ of $\bigoplus_{P,Q \in \cP_n} \fullchi{A \circ PBQ}(x) = \sum_{k=0}^n c_kx^k$ is obtained as the (standard) sum of a maximal permanent minor of order $k$ of $A$ and a maximal permanent minor of order $k$ of $B$, where the permutation matrices $P$ and $Q$ make sure that the positions of the elements of $B$ that contribute to the maximal permanent are mapped to the exact positions of the elements of $A$ that contribute to the maximal permanent minor.
	
	To be precise, let $p(x) = \sum_{k=0}^n a_kx^k$ and $q(x)=\sum_{k=0}^n b_k x^k$.
	By \eqref{eq:fullcharcoeff2} each coefficient $c_{n-k}$ is
	\begin{align*}
	c_{n-k}
	&= \bigoplus_{P,Q\in \cP_n} \eta_k(A\circ PBQ)\\
	&= \bigoplus_{\sigma,\tau\in\SG_n}
	\bigoplus_{\substack{i_1,i_2,\dots,i_k\\ j_1,j_2,\dots,j_k}} 
	a_{i_1j_1}b_{\sigma(i_1)\tau(j_1)}
	a_{i_2j_2}b_{\sigma(i_2)\tau(j_2)}
	\dotsm
	a_{i_kj_k}b_{\sigma(i_k)\tau(j_k)}
	\\
	&= \bigoplus_{\substack{i_1,i_2,\dots,i_k\\ i_1',i_2',\dots,i_k'\\ j_1,j_2,\dots,j_k\\ j_1',j_2',\dots,j_k'}} 
	a_{i_1j_1} a_{i_2j_2} \dotsm a_{i_kj_k}
	b_{i_1'j_1'} b_{i_2'j_2'} \dotsm b_{i_k'j_k'}
	\\
	&= \eta_k(A) \,\eta_k(B)
	\\
	&= a_{n-k} \, b_{n-k},
	\end{align*}
	which is $d_{n-k}$, the $(n-k)$th coefficient of $(p \circ q)(x)$.

	Property \ref{it2:thm:Hdprod} follows from  Proposition~\ref{prop:hadamardroots}.
\end{proof}

\subsection{Hadamard product of characteristic maxpolynomials via multiplicative convolution}
When trying to form the analogue of Theorem~\ref{thm:maxconv} with matrix multiplication instead of summation (max), that is, using an expression of the form $\bigoplus_{P,Q \in \cP_n}  \fullchi{A PBQ}(x)$, we realize that it cannot be done in general and that we have to restrict ourselves to specific classes of matrices.
The problem lies in the fact that when performing matrix multiplication we perform a series of scalar products of row vectors by column vectors, and these operations depend on the order of the elements in each vector.
Specifically, the scalar product is maximal only when the maximal element in each of the vectors is in the same position.

Hence, it is desired that the matrices $A$ and $B$ match with regard to the positions of the maximal elements in the rows of $A$ and the columns of $B$.
For example, $A$ and $B$ match when the maximal elements of the rows of $A$ lie in different columns and the maximal elements of the columns of $B$ lie in different rows (in case there are more than one maximal element in a row of $A$ (resp. a column of $B$), each of these elements is a legitimate choice).
More generally, it is necessary that the matrices $A^T$ and $B$ have the same \textbf{max-column partition}, which is defined as follows.
Given a matrix $M \in \Rnnmax$, for each $j$, $j = 1,\ldots,n$, let $m_j$ be the $j$-th column of $M$ and let $m_{i_j j} = \norm{m_j}$ be the maximal element of this column (if the maximum is attained in more than one place then we have more than one max-column partition associated with $M$).
Let $\sigma \in \SG_n$ be a permutation which arranges the maximal elements of the columns in ascending order:
$$
m_{i_{\sigma(1)} \sigma(1)} \leq m_{i_{\sigma(2)} \sigma(2)} \leq \cdots \leq m_{i_{\sigma(n)} \sigma(n)}.
$$
Then a max-column partition of $M$ is a partition of $[n]$ into blocks such that $j$ and $k$ lie in the same block if the corresponding matrix elements according to the above order lie in the same row:
$$
j \sim k \quad \Longleftrightarrow \quad i_{\sigma(j)} = i_{\sigma(k)}.
$$
For example, at the bottom of the lattice of partitions is the one where the maximal elements of the columns of $M$ lie in distinct rows: $i_j \neq i_k$ for each $i \neq k$. The max-column partition of $M$ consists then of $n$ blocks, where each block is a singleton.
On the other hand, the top partition is the one where there is a single block with $n$ elements, corresponding to the case where the maximal elements of the columns belong to the same row: $i_1 = i_2 = \cdots = i_n$.

Given two matrices $A, B \in \Rnnmax$, such that $A^T$ and $B$ share a max-column partition, we show now that the Hadamard product of the Gram characteristic maxpolynomials of $A^T$ and $B$ equals the maximum ($\oplus$), over all permutation matrices $P \in \cP_n$, of the set of full characteristic maxpolynomials of $APB$.
This maximum is achieved on a specific permutation matrix $P_0$ which ``orients'' $B$ towards $A$ by rearranging the rows of $B$.
In addition, when we allow multiplication on the right of $B$ with permutation matrices $Q \in \cP_n$ then we can restrict ourselves to the set of characteristic maxpolynomials instead of full characteristic maxpolynomials.
Here the orientation of $B$ towards $A$ is achieved through two specific permutation matrices $P_0$ and $Q_0$, which rearrange the rows as well as the columns of $B$.
\begin{theorem}
  Let $A, B \in \Rnnmax$ be two matrices, such that $A^T$ and $B$ share a max-column partition.
  Let $p(x)=\chi_{\widehat{A^T}}(x), q(x)=\chi_{\widehat{B}}(x)$ be the Gram characteristic maxpolynomials of $A^T, B$, respectively.
  Then
  \be
  (p \circ q)(x) = \bigoplus_{P \in \cP_n} \fullchi{A P B}(x)
  = \bigoplus_{P,Q \in \cP_n} \chi_{A P B Q}(x).
  \label{thm:maxcoloartition}
  \ee
  Moreover, there exist permutation matrices $P_0, Q_0 \in \cP_n$ such that
  $$
  (p \circ q)(x) = \fullchi{A P_0 B}(x) = \chi_{A P_0 B Q_0}(x).
  $$
  In addition, the vector of ordered roots of $(p \circ q)(x)$ equals the Hadamard product of the vector of ordered roots of $p(x)$ and the vector of ordered roots of $q(x)$.
  \label{thm:plusconv}
\end{theorem}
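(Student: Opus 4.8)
The plan is to reduce both equalities to a comparison of coefficients, using that by Theorem~\ref{thm:fullsemi} the full characteristic maxpolynomial $\fullchi{M}(x)$ is FCF with coefficient of $x^{n-k}$ equal to $\eta_k(M)$, together with the ordered-roots description of the Hadamard product. First I would identify the roots: by \eqref{eq:maxrow}, $p(x)=\chi_{\widehat{A^T}}(x)$ is FCF with roots the maximal row elements $m_1,\dots,m_n$ of $A$ (the column maxima of $A^T$), and $q(x)=\chi_{\widehat{B}}(x)$ is FCF with roots the maximal column elements $\ell_1,\dots,\ell_n$ of $B$. Proposition~\ref{prop:hadamardroots} then shows that $(p\circ q)(x)$ is FCF and that its ordered roots are $m_{(i)}\odot\ell_{(i)}$, the Hadamard product of the two ordered lists; this is already the final assertion, and it tells us that the coefficient of $x^{n-k}$ in $(p\circ q)(x)$ is $\bigodot_{i=1}^{k}\bigl(m_{(i)}\odot\ell_{(i)}\bigr)$, the standard sum of the $k$ largest row maxima of $A$ and the $k$ largest column maxima of $B$.

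Next I would establish the upper bound $\fullchi{APB}(x)\le(p\circ q)(x)$ for every $P$. Writing $(APB)_{ij}=\bigoplus_t a_{it}\,b_{\pi(t)j}$, where $\pi$ is the permutation associated with $P$, and using $a_{it}\le m_i$, $b_{rj}\le\ell_j$ (the entrywise form of \eqref{eq:cauchy}), every entry in row $i$ and column $j$ is at most $m_i\odot\ell_j$. Bounding the permanent of an arbitrary $k\times k$ submatrix $(APB)_{I,J}$ term by term along a bijection $I\to J$ gives $\eta_k(APB)\le\bigodot_{i\in I}m_i\odot\bigodot_{j\in J}\ell_j\le\bigodot_{i=1}^{k} m_{(i)}\odot\bigodot_{i=1}^{k}\ell_{(i)}$, exactly the coefficient of $x^{n-k}$ in $(p\circ q)(x)$. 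Since $\eta_k(MQ)=\eta_k(M)$ (a right permutation only relabels columns) we have $\fullchi{APBQ}=\fullchi{APB}$, and since $\chi_M\le\fullchi{M}$, i.e.\ $\delta_k\le\eta_k$, it follows that $\chi_{APBQ}(x)\le\fullchi{APB}(x)\le(p\circ q)(x)$ coefficientwise for all $P,Q\in\cP_n$.

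The heart of the proof, and the step I expect to be the main obstacle, is to construct a single $P_0$ (and $Q_0$) attaining equality in all coefficients at once; this is where the shared max-column partition enters. I would sort the row maxima of $A$ decreasingly, achieved in rows $\rho(1),\dots,\rho(n)$ at columns $c_A(\rho(i))$, and the column maxima of $B$ decreasingly, achieved in columns $\gamma(1),\dots,\gamma(n)$ at rows $r_B(\gamma(i))$, and define $\pi_0$ by $\pi_0\bigl(c_A(\rho(i))\bigr)=r_B(\gamma(i))$. The shared-partition hypothesis says precisely that $c_A(\rho(i))=c_A(\rho(i'))\iff r_B(\gamma(i))=r_B(\gamma(i'))$, so $\pi_0$ is well defined and injective on the image of $c_A\circ\rho$; as the two partitions have equally many blocks these images have equal size and $\pi_0$ extends to a permutation $P_0$. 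Taking $t=c_A(\rho(i))$ gives $(AP_0B)_{\rho(i)\gamma(i)}\ge m_{(i)}\odot\ell_{(i)}$, so the submatrix on rows $\{\rho(1),\dots,\rho(k)\}$ and columns $\{\gamma(1),\dots,\gamma(k)\}$ yields $\eta_k(AP_0B)\ge\bigodot_{i=1}^{k}m_{(i)}\odot\ell_{(i)}$; with the upper bound this is an equality for every $k$, hence $\fullchi{AP_0B}(x)=(p\circ q)(x)$ and the first equality follows. Finally, letting $Q_0$ be the column permutation $\gamma(i)\mapsto\rho(i)$ moves the matched entries onto the diagonal of $AP_0BQ_0$, so $\delta_k(AP_0BQ_0)\ge\bigodot_{i=1}^{k}m_{(i)}\odot\ell_{(i)}$; combined with $\delta_k\le\eta_k$ and the same upper bound, this gives $\chi_{AP_0BQ_0}(x)=(p\circ q)(x)$, proving the second equality and the existence of $P_0,Q_0$.
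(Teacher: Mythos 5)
Your proof is correct and follows essentially the same route as the paper: an upper bound on $\eta_k(APB)$ via the entrywise Cauchy--Bunyakovsky--Schwarz estimate $\langle a_i,(PB)_j\rangle\le\norm{a_i}\,\norm{b_j}$, identification of the resulting bound with the coefficients of $(p\circ q)(x)$ through Proposition~\ref{prop:hadamardroots}, and an explicit construction of $P_0$ (and then $Q_0$) from the shared max-column partition to attain equality. If anything, you are more explicit than the paper about the well-definedness and extendability of $\pi_0$ and about why right-multiplication by $Q$ and passing from $\fullchi{}$ to $\chi$ can only decrease coefficients, but the underlying argument is the same.
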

\begin{proof}
  We start with the polynomial
  $$
  \bigoplus_{k=0}^n c_k x^k :=   \bigoplus_{P \in \cP_n} \fullchi{A P B}(x)
  $$
  By   \eqref{eq:fullcharcoeff2} the coefficients are given by
  \begin{align*}
    c_{n-k}
    &= \bigoplus_{P \in \cP_n} \eta_k(A P B)\\
    &= \bigoplus_{\pi\in\SG_n} \bigoplus_{\substack{i_1,i_2,\dots,i_k\\
    j_1,j_2,\dots,j_k}} \bigoplus_{l_1,l_2,\dots,l_k} a_{i_1l_1}b_{\pi(l_1)j_1}
    a_{i_2l_2}b_{\pi(l_2)j_2}\dotsm a_{i_kl_k}b_{\pi(l_k)j_k}
    ;
    \intertext{each summation can be estimated with the Cauchy-Bunyakovsky-Schwarz 
      inequality  
      \eqref{eq:cauchy}
    }
    &\leq  \bigoplus_{\substack{i_1,i_2,\dots,i_k\\
    j_1,j_2,\dots,j_k}} \norm{a_{i_1}} \norm{b_{j_1}} \norm{a_{i_2}}
    \norm{b_{j_2}} \dotsm \norm{a_{i_k}} \norm{b_{j_k}},
    \intertext{where by $a_i$ we denote the rows of $A$ and by $b_j$ the columns of $B$.
    Now by   \eqref{eq:grammaxminor} this maximum is
    }
    &= \eta_k(\widehat{A^T})\,\eta_k(\widehat{B})
  \end{align*}
  and we have a chain of equalities interrupted by one inequality.
  In order for equality to hold in this inequality, equality must hold
  in each Cauchy-Bunyakovsky-Schwarz inequality, which boils down to the
  requirement that $A^T$ and $B$ share a max-column partition.

Moreover, there exists a permutation matrix $P_0$ such that 
	$$
            (p\circ q)(x)=		\bigoplus_{P \in \cP_n} \fullchi{A P B}(x) = \fullchi{A P_0B}(x).
	$$
	
	Indeed, the permutation matrix $P_0$ should arrange the rows of $B$ to match the positions of the maximal elements in the rows of $A$. Let $r_1 \geq r_2 \geq \cdots \geq r_n$ be the maximal elements of the rows of $A$ (the columns of $A^T$), that is, the roots of $\chi_{\widehat{A^T}} (x)$, the Gram  characteristic maxpolynomial of $A^T$, and let $s_1 \geq s_2 \geq \cdots \geq s_n$ be the maximal elements of the columns of $B$. Since  $A^T$ and $B$ share a max-column partition, there exists a permutation matrix $P_0$ such that if 
$r_i$ is in column $k_i$ then $s_i$ is in row $k_i$ of $P_0B$, for $i = 1, \ldots, n$.

	The elements $t_j = r_j+s_j$ lie in $n$ different rows and $n$ different columns of $AP_0B$.
	By multiplying on the right with the appropriate permutation matrix $Q_0$, these elements can be moved to the diagonal and it follows that $\fullchi{A P_0 B}(x) = \chi_{A P_0 B Q_0}(x)$.
	
	By Theorem~\ref{thm:fullsemi} and Proposition~\ref{prop:hadamardroots} the set of roots of $(p \circ q)(x)$ is the Hadamard product of the roots of $p(x)$ and the roots of $q(x)$.
	In fact, it is easily verified that $p(x)$ and $q(x)$ are FCF maxpolynomials without the need for Theorem~\ref{thm:fullsemi} since one can treat $\widehat{A^T}$ and $\widehat{B}$ as diagonal matrices, as the elements $t_j$ are the only elements that contribute to the characteristic maxpolynomials $p(x)$ and $q(x)$.
	\end{proof}
\begin{remark}
	It is clear that the condition in Theorem~\ref{thm:plusconv} about $A^T$ and $B$ sharing a max-column partition is not only sufficient but also necessary for equality \eqref{thm:maxcoloartition} to hold.
\end{remark}
\begin{example}
	Let $A, B$ be the matrices
	$$
	A =
	\begin{bmatrix*}[r]
	2 &  0 & 3 & -1 \\
	0 & 0 & 1 & 1 \\
	-2 & 2 & 2 & 1 \\
	2 & -1 & 1 & 1	
	\end{bmatrix*},
	\qquad
	B =
	\begin{bmatrix*}[r]
 	 0 & 0 & -2 & 2 \\
	-2 & 1 & -1 & -1 \\
	-1 & 0 & -3 & -1 \\
	-1 & -2 & -1 & 0	
	\end{bmatrix*}.
	$$
	The Gram characteristic maxpolynomial of $A^T$ is the characteristic maxpolynomial of
	$$
	\widehat{A^T} = (AA^T)^{\circ \half} = 
		\begin{bmatrix*}[r]
                  3 &  2 & \frac{5}{2} & 2 \\
                  \midrule[0pt]
		2 & 1 & \frac{3}{2} & 1 \\
                  \midrule[0pt]
		\frac{5}{2} & \frac{3}{2} & 2 & \frac{3}{2} \\
                  \midrule[0pt]
		2 & 1 & \frac{3}{2} & 2	
	\end{bmatrix*},
	$$
	which is
	\begin{align*}
	p(x) &= x^4 \oplus 3x^3 \oplus 5x^2 \oplus 7x \oplus 8  \\
	&= (x \oplus 3) (x \oplus 2)^2 (x \oplus 1).
	\end{align*}
	Then
	$$
	\widehat{B} = (B^TB)^{\circ \half} = 
	\begin{bmatrix*}[r]
		0 & 0 & -1 & 1 \\
		0 & 1 & 0 & 1 \\
		-1 & 0 & -1 & 0 \\
		1 & 1 & 0 & 2	
	\end{bmatrix*},
	$$
	whose  characteristic maxpolynomial is
	\begin{align*}
	q(x) &= x^4 \oplus 2x^3 \oplus 3x^2 \oplus 3x \oplus 2  \\
	&= (x \oplus 2)(x \oplus 1)(x \oplus 0)(x \oplus -1).
	\end{align*}
	The Hadamard product of $p(x)$ and $q(x)$ is
	\begin{align*}
	(p \circ q)(x) &= (0 \odot 0) x^4 \oplus (3 \odot 2) x^3 \oplus (5 \odot 3) x^2 \oplus (7 \odot 3) x \oplus (8 \odot 2) \\
	&=  x^4 \oplus 5x^3 \oplus 8x^2 \oplus 10x \oplus 10  \\
	&= (x \oplus 5)(x \oplus 3)(x \oplus 2)(x \oplus 0).
	\end{align*}
	We see that the roots of $(p \circ q)(x)$ are
	$$
	(5,3,2,0) = (3,2,2,1) \circ (2,1,0,-1),
	$$
	the Hadamard product of the ordered roots of $p(x)$ and $q(x)$.
	
	Let us now look at the maximal elements of the rows of $A$ and columns
        of $B$ (marked with an asterisk):
	$$
	A =
	\begin{bmatrix*}[r]
	2 &  0 & \markentry{3} & -1 \\
	0 & 0 & \markentry{1} & \markentry{1} \\
	-2 & \markentry{2} & \markentry{2} & 1 \\
	\markentry{2} & -1 & 1 & 1	
	\end{bmatrix*},
	\qquad
	B =
	\begin{bmatrix*}[r]
	\markentry{0} & 0 & -2 & \markentry{2} \\
	-2 & \markentry{1} & \markentry{-1} & -1 \\
	-1 & 0 & -3 & -1 \\
	-1 & -2 & \markentry{-1} & 0	
	\end{bmatrix*}.
	$$
	The ordered list of column-maximal elements in $B$ is $(-1,0,1,2)$, referring to columns $(3,1,2,4)$.
	The corresponding list of rows of these elements is $((2,4),1,2,1)$, where the pair $(2,4)$ refers to the maximal element of the third column, namely \\
$-1$, which occurs in row $2$ and in row $4$.
	We see that the matrix $B$ admits two max-column partitions.
	If we choose the second row in the third column then the partition is $\{(1,3),(2,4)\}$: the first and third ordered elements ($-1$ and $1$) lie in the same row (second row), whereas the second and fourth elements ($0$ and $2$) lie also in the same row (first row).
	The second partition is $\{(1),(3),(2,4))\}$, which is obtained by choosing the $4$-th row as the position of the maximal element of the third column.
	
	The matrix $A$ admits several max-row partitions (max-column partitions of $A^T$), including the partition $\{(1),(3),(2,4))\}$, which is also a max-column partition of $B$.
	The chosen maximal elements in the rows of $A$ (in ascending order) are $a_{24},a_{33},a_{41},a_{13}$, and the chosen maximal elements in the columns of $B$ are $b_{43},b_{11},b_{22},b_{14}$:
	$$
	A =
	\begin{bmatrix*}[r]
	2 &  0 & \markentry{3} & -1 \\
	0 & 0 & 1 & \markentry{1} \\
	-2 & 2 & \markentry{2} & 1 \\
	\markentry{2} & -1 & 1 & 1	
	\end{bmatrix*},
	\qquad
	B =
	\begin{bmatrix*}[r]
	\markentry{0} & 0 & -2 &  \markentry{2} \\
	-2 & \markentry{1} & -1 & -1 \\
	-1 & 0 & -3 & -1 \\
	-1 & -2 & \markentry{-1} & 0	
	\end{bmatrix*}.
	$$
	
	The list of rows of $A$ ordered by their maximal elements (in ascending order) is $(2,3,4,1)$, with corresponding columns $(4,3,1,3)$.
	The list of columns of $B$ ordered by their maximal element is $(3,1,2,4)$ with corresponding rows $(4,1,2,1)$.
	In order to match the positions of the chosen maximal elements of the rows of $A$ and the columns of $B$ we need to transfer  $(4,1,2,1)$ to $(4,3,1,3)$, that is, perform the moves $1 \to 3$ and $2 \to 1$.
	Hence we need to move the first row of $B$ to the third and to move the second row of $B$ to the first.
	This can be achieved via the permutation matrix $P_0$ that corresponds to the permutation $(2 \, 1 \, 3)$:
	$$
	P_0 B =
	\begin{bmatrix*}[r]
	\maxzero & 0 & \maxzero & \maxzero \\
	\maxzero & \maxzero & 0 & \maxzero \\
	0 & \maxzero & \maxzero & \maxzero \\
	\maxzero & \maxzero & \maxzero & 0	
	\end{bmatrix*}
	\begin{bmatrix*}[r]
	\markentry{0} & 0 & -2 & \markentry{2} \\
	-2 & \markentry{1} & -1 & -1 \\
	-1 & 0 & -3 & -1 \\
	-1 & -2 &  \markentry{-1} & 0	
	\end{bmatrix*}
	=
	\begin{bmatrix*}[r]
	-2 & \markentry{1} & -1 & -1 \\
	-1 & 0 & -3 & -1 \\
	\markentry{0} & 0 & -2 & \markentry{2} \\
	-1 & -2 &  \markentry{-1} & 0	
	\end{bmatrix*}.
	$$

	Then, multiplying $A$ with $P_0 B$ gives
	$$
	AP_0 B =
	\begin{bmatrix*}[r]
	2 &  0 & \markentry{3} & -1 \\
	0 & 0 & 1 & \markentry{1} \\
	-2 & 2 & \markentry{2} & 1 \\
	\markentry{2} & -1 & 1 & 1	
	\end{bmatrix*}
	\begin{bmatrix*}[r]
	-2 & \markentry{1} & -1 & -1 \\
	-1 & 0 & -3 & -1 \\
	\markentry{0} & 0 & -2 & \markentry{2} \\
	-1 & -2 &  \markentry{-1} & 0	
	\end{bmatrix*}
	=
	\begin{bmatrix*}[r]
	3 & 3 & 1 & \markentry{5} \\
	1 & 1 & \markentry{0} & 3 \\
	\markentry{2} & 2 & 0 & 4 \\
	1 & \markentry{3} & 1 & 3	
	\end{bmatrix*}.
	$$	
	
	The marked elements in $AP_0 B$ are the roots of the full characteristic maxpolynomial: $\fullchi{A P_0 B}(x) = (x \oplus 5)(x \oplus 3)(x \oplus 2)(x \oplus 0) = (p \circ q)(x)$.
	
	Finally, if we want the roots of the full characteristic maxpolynomial to lie on the diagonal (and then the characteristic maxpolynomial equals the full characteristic maxpolynomial), then we need to permute the columns of $AP_0 B$ by multiplying on the right with the matrix $Q_0$, which represents the permutation $(1 \, 3 \, 2 \, 4)$:
	$$
	AP_0 B Q_0 =
	\begin{bmatrix*}[r]
	3 & 3 & 1 & \markentry{5} \\
	1 & 1 & \markentry{0} & 3 \\
	\markentry{2} & 2 & 0 & 4 \\
	1 & \markentry{3} & 1 & 3	
	\end{bmatrix*}
	\begin{bmatrix*}[r]
	\maxzero & \maxzero & 0 & \maxzero \\
	\maxzero & \maxzero & \maxzero & 0 \\
	\maxzero & 0 & \maxzero & \maxzero \\
	0 & \maxzero & \maxzero & \maxzero	
	\end{bmatrix*}
	=
	\begin{bmatrix*}[r]
	\markentry{5} &  1 & 3 & 3 \\
	3 & \markentry{0} & 1 & 1 \\
	4 & 0 & \markentry{2} & 2 \\
	3 & 1 & 1 & \markentry{3}	
	\end{bmatrix*}.
	$$	
	Clearly, $\chi_{A P_0 B Q_0}(x) = \fullchi{A P_0 B}(x)$.
\end{example}

\subsection*{Acknowledgements.} 

\begin{small}
We thank Bettina Klinz for interesting and fruitful discussions.
Research of the first author was supported by the Austrian Science Fund (FWF) Projects P25510-N26 and P29355-N35.
The third author was supported in part by a JESH grant of Austrian Academy of Sciences  and he also acknowledges a  partial support of  the Slovenian Research Agency (grants P1-0222 and J1-8133). He thanks his colleagues and staff at TU Graz for their hospitality during his visits in Austria.
\end{small}
\bibliography{polynomial_max_convolutions}

\providecommand{\bysame}{\leavevmode\hbox to3em{\hrulefill}\thinspace}
\providecommand{\MR}{\relax\ifhmode\unskip\space\fi MR }
\providecommand{\MRhref}[2]{%
  \href{http://www.ams.org/mathscinet-getitem?mr=#1}{#2}
}
\providecommand{\href}[2]{#2}
\begin{thebibliography}{10}

\bibitem{ABG01}
Marianne Akian, Ravindra~B. Bapat, and St\'{e}phane Gaubert, \emph{Generic
  asymptotics of eigenvalues using {M}in-{P}lus algebra}, In Proceedings of the
  Workshop on Max-Plus Algebras, IFAC SSSC’01, Elsevier, 2001.

\bibitem{BCOQ92}
Fran\c{c}ois~L. Baccelli, Guy Cohen, Geert~J. Olsder, and Jean-Pierre Quadrat,
  \emph{Synchronization and linearity}, Wiley Series in Probability and
  Mathematical Statistics: Probability and Mathematical Statistics, John Wiley
  \& Sons, Ltd., Chichester, 1992, An algebra for discrete event systems.

\bibitem{B98}
Ravindra~B. Bapat, \emph{A max version of the {P}erron-{F}robenius theorem},
  Linear Algebra Appl. \textbf{275/276} (1998), 3--18.

\bibitem{BertsimasTsitsiklis:1997:introduction}
Dimitris Bertsimas and John~M. Tsitsiklis, \emph{Introduction to linear
  optimization}, Athena Scientific, Belmont, Massachusetts, 1997.

\bibitem{BDM12}
Rainer Burkard, Mauro Dell'Amico, and Silvano Martello, \emph{Assignment
  problems (revised reprint)}, Society for Industrial and Applied Mathematics
  (SIAM), 2012.

\bibitem{Butkovic10}
Peter Butkovi{\v{c}}, \emph{Max-linear systems: theory and algorithms},
  Springer Monographs in Mathematics, Springer-Verlag London, Ltd., London,
  2010.

\bibitem{BGC-G09}
Peter Butkovi\v{c}, R.~A. Cuninghame-Green, and Stephane Gaubert,
  \emph{Reducible spectral theory with applications to the robustness of
  matrices in max-algebra}, SIAM J. Matrix Anal. Appl. \textbf{31} (2009),
  no.~3, 1412--1431.

\bibitem{C-G83}
Ray~A. Cuninghame-Green, \emph{The characteristic maxpolynomial of a matrix},
  J. Math. Anal. Appl. \textbf{95} (1983), no.~1, 110--116.

\bibitem{C-GM80}
Ray~A. Cuninghame-Green and P.~F.~J. Meijer, \emph{An algebra for
  piecewise-linear minimax problems}, Discrete Appl. Math. \textbf{2} (1980),
  no.~4, 267--294.

\bibitem{DT97}
George~B. Dantzig and Mukund~N. Thapa, \emph{Linear programming. 1}, Springer
  Series in Operations Research, Springer-Verlag, New York, 1997.

\bibitem{DeSDeM02}
Bart De~Schutter and Bart De~Moor, \emph{The {QR} decomposition and the
  singular value decomposition in the symmetrized max-plus algebra revisited},
  SIAM Rev. \textbf{44} (2002), no.~3, 417--454, Reprint of SIAM J. Matrix
  Anal. App. {{\bf{1}}9} (1998), no. 2, 378--406 (electronic).

\bibitem{DellamicoMartello:1997:assignment}
Mauro Dell'Amico and Silvano Martello, \emph{The {$k$}-cardinality assignment
  problem}, Discrete Appl. Math. \textbf{76} (1997), no.~1-3, 103--121.

\bibitem{Dieudonne38}
Jean Dieudonn\'e, \emph{La théorie analytique des polynômes d'une variable
  (à coefficients quelconques)}, Mémorial des sciences mathématiques,
  fascicule 93, Gauthier-Villars, Paris, 1938.

\bibitem{DP16}
Roman Drnovšek and Aljoša Peperko, \emph{Inequalities on the spectral radius
  and the operator norm of {H}adamard products of positive operators on
  sequence spaces}, Banach J. Math. Anal. \textbf{10} (2016), no.~4, 800--814.

\bibitem{GriggManwaring:2007:elementary}
Nathan {Grigg} and Nathan {Manwaring}, \emph{{An Elementary Proof of the
  Fundamental Theorem of Tropical Algebra}}, July 2007, arXiv:0707.2591.

\bibitem{Hook14}
James Hook, \emph{Max-plus singular values}, Linear Algebra Appl. \textbf{486}
  (2015), 419--442.

\bibitem{IKR14}
Zur Izhakian, Manfred Knebusch, and Louis Rowen, \emph{Algebraic structures of
  tropical mathematics}, Tropical and idempotent mathematics and applications,
  Contemp. Math., vol. 616, Amer. Math. Soc., Providence, RI, 2014,
  pp.~125--150.

\bibitem{Litvinov07}
Grigory~L. Litvinov, \emph{The {M}aslov dequantization, idempotent and tropical
  mathematics: a brief introduction}, J. Math. Sci. \textbf{140} (2007),
  426--444, Translated from Zap. Nauchn. Sem. (POMI), {\bf 326}, (2005),
  145--182.

\bibitem{LM05}
Grigory~L. Litvinov and Victor~P. Maslov (eds.), \emph{Idempotent mathematics
  and mathematical physics}, Contemporary Mathematics, vol. 377, American
  Mathematical Society, Providence, RI, 2005, Papers from the International
  Workshop held in Vienna, February 3--10, 2003.

\bibitem{Marcus16}
Adam~W. Marcus, \emph{Polynomial convolutions and (finite) free probability},
  2016, preprint on webpage at
  https://web.math.princeton.edu/$\sim$amarcus/index.html.

\bibitem{MSS15}
Adam~W. Marcus, Daniel~A. Spielman, and Nikhil Srivastava, \emph{Finite free
  convolutions of polynomials}, arXiv \textbf{1504.00350} (2015).

\bibitem{MSS15a}
\bysame, \emph{Interlacing families {II}: {M}ixed characteristic polynomials
  and the {K}adison-{S}inger problem}, Ann. of Math. (2) \textbf{182} (2015),
  no.~1, 327--350.

\bibitem{Marden66}
Morris Marden, \emph{Geometry of polynomials}, Second edition. Mathematical
  Surveys, No. 3, American Mathematical Society, Providence, R.I., 1966.

\bibitem{MatousekGaertner:2007:understanding}
Ji\v{r}\'i Matou\v{s}ek and Bernd G\"artner, \emph{Understanding and using
  linear programming}, Universitext, Springer, 2007.

\bibitem{MP15}
Vladimir M\"uller and Aljo\v{s}a Peperko, \emph{On the spectrum in max
  algebra}, Linear Algebra Appl. \textbf{485} (2015), 250--266.

\bibitem{PS05}
Lior Pachter and Bernd Sturmfels (eds.), \emph{Algebraic statistics for
  computational biology}, Cambridge University Press, New York, 2005.

\bibitem{P08}
Aljoša Peperko, \emph{On the max version of the generalized spectral radius
  theorem}, Linear Algebra Appl. \textbf{428} (2008), no.~10, 2312--2318.

\bibitem{P12}
\bysame, \emph{Bounds on the generalized and the joint spectral radius of
  {H}adamard products of bounded sets of positive operators on sequence
  spaces}, Linear Algebra Appl. \textbf{437} (2012), no.~1, 189--201.

\bibitem{RS02}
Qazi~I. Rahman and Gerhard Schmeisser, \emph{Analytic theory of polynomials},
  London Mathematical Society Monographs. New Series, vol.~26, The Clarendon
  Press, Oxford University Press, Oxford, 2002.

\bibitem{Schrijver:1986:theory}
Alexander Schrijver, \emph{Theory of linear and integer programming},
  Wiley-Interscience Series in Discrete Mathematics, John Wiley \& Sons, Ltd.,
  Chichester, 1986, A Wiley-Interscience Publication. \MR{874114}

\bibitem{Szego22}
G\'{a}bor Szeg\H{o}, \emph{Bemerkungen zu einem {S}atz von {J}. {H}. {G}race
  \"uber die {W}urzeln algebraischer {G}leichungen}, Math. Z. \textbf{13}
  (1922), no.~1, 28--55.

\bibitem{Tsai:2012:working}
Yen-Lung Tsai, \emph{Working with tropical meromorphic functions of one
  variable}, Taiwanese J. Math. \textbf{16} (2012), no.~2, 691--712.
  \MR{2892907}

\bibitem{Walsh22}
Joseph~L. Walsh, \emph{On the location of the roots of certain types of
  polynomials}, Trans. Amer. Math. Soc. \textbf{24} (1922), no.~3, 163--180.

\end{thebibliography}
\bibliographystyle{amsplain}
\end{document}